\renewcommand{\E}{\ensuremath{\operatorname{E}}} % E_N map
\newcommand{\FS}{\ensuremath{\operatorname{FS}}} % E_N map
\renewcommand{\I}{\ensuremath{\mathcal{I}}} % Inner products
\renewcommand{\H}{\ensuremath{\mathcal{H}}} % Eigenspaces
\newcommand{\K}{\ensuremath{\mathcal{K}}} % K\"ahler class
\newcommand{\PiLeq}[1]{
	\ensuremath{\Pi_{[0, #1]}}
}
\newcommand{\Toeplitz}[2]{\ensuremath{ \PiLeq{#1} #2 \PiLeq{#1} }}
\newcommand{\ToeplitzN}[1]{\ensuremath{ \Pi_{\leq N} #1 \Pi_{\leq N} }}
\newcommand{\Inner}[1][]{\ensuremath{\langle #1\cdot,\cdot\rangle}}
\begin{document}
\nocite{*}
\title{Euclidean Embeddings and Riemannian Bergman Metrics}
%\title{Riemannian Bergman Metrics}
\author{Eric Potash}
\address{Department of Mathematics \\Northwestern University \\ 2033 Sheridan Road Evanston, IL 60208-2730, USA}
\email{potash@math.northwestern.edu}
\begin{abstract}
	Consider the sum of the first $N$ eigenspaces for the Laplacian on a Riemannian manifold.
	A basis for this space determines a map to Euclidean space and for $N$ sufficiently large the map is an embedding.  
	In analogy with a fruitful idea of K\"ahler geometry, we define (Riemannian) Bergman metrics of degree $N$ to be those metrics induced by such embeddings.
	Our main result is to identify a natural sequence of Bergman metrics approximating any given Riemannian metric.
	In particular we have constructed finite dimensional symmetric space approximations to the space of all Riemannian metrics.
	Moreover the construction induces a Riemannian metric on that infinite dimensional manifold which we compute explicitly.
\end{abstract}
\maketitle

\section{Introduction}
\subsection{Overview}
Let $M$ be a closed smooth manifold and $\Met(M)$ the space of all smooth Riemannian metrics on $M$. The main result of this thesis is to use a reference Riemannian metric $g_0$ to define and analyze a sequence of finite dimensional approximations $\B_N$ of $\Met(M)$ which we call \textit{(Riemannian) Bergman spaces}.

Let $\H_{\leq N}$ be the sum of the first $N$ real eigenspaces for the $g_0$ Laplace-Beltrami operator, and let $d_{\leq N}$ be the dimension of this vector space.  Then an ordered basis $\Psi$ of eigenfunctions for $\H_{\leq N}$ determines a map to Euclidean space:
\[\Psi = (\psi_1,\ldots,\psi_{d_{\leq N}}): M \to \R^{d_{\leq N}}.\]
It follows from the density of eigenfunctions that, for $N$ sufficiently large, such a map is an embedding of the manifold into Euclidean space.

We define, for $N\gg 0$ the Bergman spaces $\B_N$ to be the collection of pullbacks of the Euclidean metric $g_E$ by bases of eigenfunctions:
\[\B_N = \{\Psi^*g_E \textrm{ ; }\Psi\textrm{ is a basis for }\H_{\leq N} \}.\]

If two bases differ by an orthogonal transformation then the pullback metrics are the same so we have a natural map from inner products to Bergman metrics
\[\{\textrm{inner products on }\H_{\leq N}\} \to \B_N .\]
defined by taking an inner product to the pullback of the Euclidean metric by an orthonormal basis.  Each Bergman space is thus the image of the symmetric space $\textrm{O}(\H_{\leq N})\backslash\GL(\H_{\leq N})$ in $\Met(M)$. 

The main consequence of our results is Corollary \ref{DensityCor} that the Bergman spaces are dense in $\Met(M)$ in the $C^0$ topology
\[\Met(M) \subset \lim_{N\to\infty}\overline{\B_N}\]
and there is a natural approximation map defined by (\ref{HilbDef}).  Consequently the symmetric space metrics on $\B_N$ induce a Riemannian metric on $\Met(M)$, which we shall compute explicitly in Theorem \ref{MetThm}.

Recall that $\Met(M)$ is a contractible cone in the space of all symmetric covariant two tensor fields. Hence as it is an open subset of a Fr\'echet space so it is a Fr\'echet manifold of infinite dimension \cite{Clarke}. This space, as well as its quotient by the group of diffeomorphisms of $M$, has been studied extensively. $\Met(M)$ has the structure of a fiber bundle over the space of smooth normalized volume forms on $M$ with fiber $C^\infty(M)$ \cite{Ebin}. Model theories of quantum gravity \cite{DeWitt} are concerned with the evolution of measures on $\Met(M)$. There is a natural Riemannian metric on the space of Riemannian metrics called the $L^2$ metric and for which the curvature and geodesics are known \cite{Freed}, \cite{Michor}. Geometric flows such as Ricci flow are the integral curves of a vector fields on $\Met(M)$, at least for short time. 

\subsection{Context}\label{ContextSection}
By an \textit{approximation} of a space we mean simply a sequence of finite dimensional subspaces whose union is dense, together with a compatible sequence of approximation maps.  In general approximating subspaces need not be increasing, though our Bergman spaces will have the property $\B_N \subset\overline{\B_{N+1}}$. To put our construction in context we review some important examples of approximations in analysis and geometry.

The inspiration and most relevant approximation for us is that of Bergman spaces of K\"ahler metrics. Let $(X,\omega_0)$ be a K\"ahler manifold and $\K$ the space of K\"ahler metrics cohomologous to $\omega_0$. By the $\d\bar \d$ lemma we can identify K\"ahler metrics with potentials:
\[\K = \{ \phi \in \C^\infty(X) \textrm{ ; } \omega_0 + i\d\bar\d\phi > 0\}.\]
Moreover we can identify $\K$ with the space of positive Hermitian metrics on $L$ via the map taking such a metric $h$ to its associated curvature form.

Then the theory uses an ample line bundle $L \to X$ to construct a sequence of spaces approximating $\K$ as follows.  
There is a canonical map from $X$ to the projectivization of the dual of the space of holomorphic sections of $L^k$
\begin{align*}
	\iota_k: X &\to \P(H^0(L^k)^*) \\
	x &\mapsto [s \mapsto s(x)].
\end{align*}
By the Kodaira embedding theorem, $\iota_k$ is an embedding and an inner product on $H^0(L^k)^*$ induces a K\"ahler metric on $\P(H^0(L^k)^*)$ which we may pull back.

Thus there is a map from $\I_k$, the space of Hermitian inner products on $H^0(L^k)^*$, to the space of K\"ahler metrics on $X$.  Namely in the notation of Donaldson we define
\begin{equation*}
	\FS_k: \I_k \to \K
\end{equation*}
by taking an inner product $H$ to $\frac{1}{k}\iota_k^*H$ where $H$ denotes the induced metric on the projective space.

The space of K\"ahler Bergman metrics of height $k$ is defined as the image of the $\FS_k$ map:
\begin{equation*}
	\B_k := \{\FS_k(H)\textrm{ ; }H \in \I_k \}.
\end{equation*}

There is also a map
\begin{equation*}
	\Hilb_k: \K \to \I_k
\end{equation*}
which takes a positive Hermitian metric to its associated $L^2$ inner product
\begin{equation*}
	||s||^2_{\Hilb_k(h)} = \int_{X}|s|^2_h dV(\omega_h).
\end{equation*}

% STEVE: Actually, Tian only proved C^4 or something like that. I think Ruan proved C^infty by a complicated method and then my prof made it simple, along with a complete asymptotic expansion/

Then Tian's asymptotic isometry theorem \cite{Tian} states that $\FS_k\circ \Hilb_k (h)$ tends back to $h$ in the $C^4$ topology. This was later improved to a $C^\infty$ asymptotic expansion \cite{ZelditchTian}. More precisely we have for any $r$
\begin{equation*}
	||k^{-1}\FS_k\circ \Hilb_k (h) - h||_{C^r} = O(k^{-1}).
\end{equation*}

In particular the union $\bigcup_k \B_k$ is dense in $\K$. 

Also the reference metric $\omega_0$ induces a reference Hermitian metric $h_0$ which we use to identify $\I_k$ with the symmetric space $\GL(d_k+1,\C)/\U(d_k+1)$.  Thus the construction induces a Riemannian metric on $\K$ given as a limit of pullbacks of the symmetric space metrics:
\[||\delta\omega||^2 = \lim_{k\to\infty} ||D\Hilb_k(\delta\omega)||^2. \]
In fact it follows from \cite{ZelditchTian} (cf. \cite{ChenSun}) that the limiting metric is the well-known Mabucchi-Semmes-Donaldson metric
\[||\delta\phi||^2 = k^{n+2}\int_{X} ||\delta\phi||^2 \omega_\phi^n + O(k^{n+1}).\]

Below we will briefly mention some of the many applications of these Bergman metrics to important problems in K\"ahler geometry.

A simpler example of an approximation is given by Fourier series which approximate $L^2(M)$ where $(M,g)$ is a closed Riemannian manifold. Here the approximating subspaces are the finite dimensional spaces of (smooth) functions spanned by the first $N$ eigenfunctions. The approximating maps are given by spectral truncation.

Using Fourier series we can approximate a conformal class of Riemannian metrics on a manifold.  Namely fix a reference metric $g_0 \in \Met(M)$ and let $\Conf(g_0) \subset \Met(M)$ be the conformal class of $g_0$.  Then we have the parameterization
\[\Conf(g_0) = \{e^u g_0\textrm{ ; }u \in C^\infty(M)\}.\]
and so an approximation of $C^\infty(M)$, such as Fourier series, induces an approximation of $\Conf(g_0)$.  
A similar setup where the manifold is replaced by a planar domain was used by Sheffield to define Gaussian free fields \cite{Sheffield}.
%This scheme was invoked in \cite{Jakobson} to put a probability measure on $\Conf(g_0)$ and study statistical properties of conformal metrics. 

\subsection{Setup}
Let $(M,g_0)$ be a closed Riemannian manifold of dimension $n$ and $\dV_{g_0}$ the associated volume form and $\D$ the associated \textit{non-negative} Laplace-Beltrami operator. Unless noted otherwise, metric-dependent objects such as the cosphere bundle $S^*M$ are defined with respect to the reference metric $g_0$. Let $\phi_j$ be orthonormal Laplace eigenfunctions with eigenvalues $\lambda_j^2$,
\begin{align*}
	\langle \phi_j,\phi_k\rangle = \delta_{jk},\\
	\D\phi_j = \lambda_j^2 \phi_j.
\end{align*}
We will also use $\mu_N^2$ to denote the increasing sequence of \textit{distinct} eigenvalues of $\D$. Let $\H_N \subset L^2$ be the \textit{eigenfunctions of degree N}, that is the $\mu_N^2$ eigenspace
\[\H_N = \ker(\D - \mu_N^2) \]
and let $d_N = \dim \H_N$ be the multiplicty of $\mu_N^2$.  For $N > 0$ define $\H_{\leq N}$ to be the \textit{eigenfunctions up to degree N}
\[\H_{\leq N} = \bigoplus_{j=0}^N \H_{j}\]
and $d_{\leq N} = \dim \H_{\leq N}$. We endow these vector spaces with the restricted $L^2$ inner product which we denote by $\langle\cdot ,\cdot \rangle$.
Define $\Pi_N$ and $\Pi_{\leq N}$ to be orthogonal projection from $L^2(M, dV_{g_0})$ onto $\H_N$ and $\H_{\leq N}$, respectively.

Finally let 
\[\Phi_N = (\phi_{d_{\leq (N-1)+1}},\ldots,\phi_{d_{\leq N}}), \Phi_{\leq N} = (\phi_0,\ldots,\phi_{d_{\leq N}})\] 
be (ordered) orthonormal bases for $\H_N$, $\H_{\leq N}$ respectively. These may be viewed as maps from $M$ to Euclidean space with Euclidean metric $g_E$.

In Proposition \ref{EmbeddingProp} we show that $\Phi_{\leq N}$ is both injective and an immersion, that is an embedding, for $N$ sufficiently large. Thus for such $N$ we may consider the pullback of the Euclidean metric $\Phi_{\leq N}^* g_E$ on a general Riemannian manifold.

\subsection{Previous Results}
Assume that $(M,g_0)$ is an isotropy irreducible Riemannian manifold, whose definition we shall recall presently. Under this assumption Takahashi \cite{Takahashi} proved that for every $N > 0$ we have
\[\Phi_N^*g_E = C_N g_0\]
for $C_N = \mu_N^2 d_N / \Vol(M,g_0)$. That is the pullback of the Euclidean metric by the \textit{orthonormal basis} $\Phi_N$ is, up to rescaling, an isometry of $g_0$. And thus so is $\Phi_{\leq N}$:
\begin{equation}
	\Phi_{\leq N}^*g_E = \tilde C_N g_0
	\label{IsometryEq}
\end{equation}
where $\tilde C_N = \sum_{k=1}^N C_k$.

Let $\Isom (M,g_0)$ be the Riemannian isometry group. Recall that the isotropy group $\Isom_x (M,g_0)$ at a point $x$ in $M$ is the subgroup of isometries which fix $x$. This group acts (by the derivative) on the tangent space $T_x M$. A connected Riemannian manifold is said to be \textit{isotropy irreducible} if the isometry group acts transitively on $M$ and the isotropy action at each fixed point is irreducible. Hence such spaces are homogeneous and can be written as $\Isom(M,g_0) / \Isom_x (M,g_0)$. An example is the $n$ dimensional sphere $S^n$.

In the case of the round sphere, the parity of spherical harmonics implies that the metric induced by any basis for $\H_N$ is always even. Hence such metrics will not approximate all of $\Met(S^n)$. Because of this we focus on pullback metrics by bases for the larger spaces of eigenfunctions $\H_{\leq N}$.

Zelditch \cite{ZelditchWaves} gave a generalization of (\ref{IsometryEq}) by showing that the sequence of maps $\Phi_{\leq N}$ is asymptotically an isometry:
\begin{equation}
	\Phi_{\leq N}^*g_E = \frac{\Vol(S^{n-1})}{n(n+2)(2\pi)^{n}} g_0 \mu_N^{n+2} + O(\mu_N^{n+1})
	\label{PiIsometryEq}
\end{equation}
as $N\to\infty$. We will recall the proof of this result in Proposition \ref{DDPiProp} below.

A related result about embedding a manifold into $l_2$ using the heat kernel $e^{-t\D}$ was obtained in \cite{BBG}. Also of interest are so-called \textit{eigenmaps}, which are employed in a variety of applications to locally parameterize data which lie on a low dimensional manifold embedded in a high dimensional ambient Euclidean space. See \cite{JMS} for mathematical results in that direction.

\subsection{Riemannian Bergman Metrics}
Equation (\ref{PiIsometryEq}) shows that an arbitrary Riemannian metric $g_0$ may be approximated using its own eigenfunctions $\Phi_{\leq N}$.  However this approximation scheme is not finite-dimensional because the set of all $\Phi_{\leq N}$ as $g_0$ varies is not.

We remedy this by fixing $g_0$ which we call the \textit{reference metric} and studying pullbacks by arbitrary bases.  That is we define the \textit{Bergman metrics of degree} $N$ to be
\begin{equation}
	\B_N = \{ \Psi^*g_E ; \Psi\textrm{ is a basis for }\H_{\leq N}\}
	\label{}
\end{equation}
Of course if $\Psi$ is a basis for $\H_{\leq N}$ then $\Psi = Q\Phi_{\leq N}$ for some $Q \in \GL(\H_{\leq N})$. Hence if $\Phi_{\leq N}$ is an embedding, which it is for $N \gg 0$ by Proposition \ref{EmbeddingProp}, then so is $\Psi$ and so for such $N$ the Bergman metrics are Riemannian metrics.  

Also note that $\B_N \subset \overline {\B_{N+1}}$ since if $\Psi$ is a basis for $\H_{\leq N}$ it may be extended to a basis for $\H_{\leq N+1}$ as $\Psi_t = (\Psi,t\Phi_{N+1})$ and then
\begin{equation}\label{BNBNPlusOneEq}
\lim_{t\to 0}\Psi_t^*g_E \to \Psi^*g_E.
\end{equation}

An equivalent and somewhat more natural way of understanding Bergman metrics is to start with the canoncial map $\iota_{\leq N}$ from $M$ to the dual of eigenfunctions up to degree $N$
\begin{align*}
	\iota_{\leq N}: M &\to \H_{\leq N}^* \\
	x &\mapsto (\psi \mapsto \psi(x))
\end{align*}
assigning to each point the evaluation functional, and to pull back an inner product on $\H_{\leq N}^*$ to a Riemannian metric on $M$.

The $L^2$ inner product on $\H_{\leq N}$ induces an inner product on the dual $\H_{\leq N}^*$, which we also denote $\Inner$, and we have
\begin{equation}
	\iota_{\leq N}^*\Inner = \Phi_{\leq N}^*g_E.
\end{equation}
To see this let $X,Y \in T_p M$. Then we  have
\[{\iota_{\leq N}}_*(X) = \sum_{j} d\phi_j(X)\langle \cdot, \phi_j\rangle\]
so
\begin{align*}
	({\iota_{\leq N}}^*\langle \cdot, \cdot\rangle)(X,Y) &= \left\langle \sum_{j} d\phi_j(X)\langle \cdot, \phi_j\rangle\, \sum_{k}, d\phi_k(Y)\langle \cdot, \phi_k\rangle\right\rangle \\
	&= \sum_j d\phi_j(X) d\phi_j(Y) \\
	&= \Phi_{\leq N}^* g_E(X,Y)
\end{align*}
by definition of the dual inner product.  In fact that definition is all we used and so the above is true with $\Inner$ replaced by any inner product and $\Phi_{\leq N}$ with a corresponding orthonormal basis.

Concretely, if $R \in \GL(\H_{\leq N})$ is positive-definite and symmetric then $\Inner[R^{-1}]$ is another inner product on $\H_{\leq N}$, corresponding to $\Inner[R]$ on the dual where $R$ acts by precomposition. Then if $\sqrt{R}$ is a self-adjoint square root we have
\begin{equation*}
	\langle R^{-1} \sqrt{R}\phi_j, \sqrt{R}\phi_k \rangle = \delta_{jk}
\end{equation*}
so 
\begin{equation}
	\sqrt{R}\Phi_{\leq N} = (\sqrt{R}\phi_1,\ldots,\sqrt{R}\phi_{d_{\leq N}})
	\label{ONBEq}
\end{equation}
is an orthonormal basis with respect to $\Inner[R^{-1}]$ and so we have
 \begin{equation}
	\iota_{\leq N}^*\Inner[R] = (\sqrt{R}\Phi_{\leq N})^*g_E.
	\label{IotaEq}
\end{equation}

In analogy with the K\"ahler case we define a map taking inner products on $\H_{\leq N}^*$ to Riemannian Bergman metrics of degree $N$
\begin{equation}\label{ENDef}
	\E_N: \I(\H_{\leq N}^*) \to \B_N
\end{equation}
given by pulling back under $\iota_{\leq N}$:
\[\E_N(\cdot) = \iota_{\leq N}^*(\cdot).\]
Equivalently by (\ref{IotaEq}), $\E_N$ takes an inner product on $\H_{\leq N}^*$ to the metric given by pulling back the Euclidean metric on $\R^{d_{\leq N}}$ by an orthonormal basis for the corresponding inner product on $\H_{\leq N}$.

\subsection{Summary of Results}
Using the tools of semiclassical analysis we will analyze $\E_N$ applied to inner products of the form $\Inner[\Pi_N B\Pi_N]$ as $N\to\infty$.
Here $B$ is a pseudodifferential operator of order zero (hence bounded on $L^2$) so $\Pi_{\leq N} B \Pi_{\leq N}$ is a linear operator on $\H_{\leq N}$ which plays the role of $R$ in (\ref{ONBEq}), (\ref{IotaEq}) above. The inner product is implicitly restricted to $\H_{\leq N}^*$.  For convenience we will simply write $\Inner[B]$ and the inner product will be implicitly restricted to $\H_{\leq N}$.

Of course for $\Inner[B]$ to be an inner product on $\H_{\leq N}^*$ in the limit, $B$ must be symmetric and $\langle B \phi_j, \phi_j\rangle > 0$ for $j > 0$. If $B$ satisfies the last condition we say that it is \textit{positive-definite}.  Let $b(x,\xi) \in C^\infty(T^*M)$ to denote the principal symbol of $B$. 

Our main calculation is the following limit formula for the $\E_N$ map defined in (\ref{ENDef}).
\begin{theorem}{A}\label{LimitThm}
	If $B$ is a positive-definite symmetric order zero pseudodifferential operator then  
	\[ \E_N(\Inner[B])(x) = \frac{\mu_N^{n+2}}{(2\pi)^{n}(n+2)}\int_{S^*_x M} b(x,\xi) \xi\otimes\xi\ dS_{g_0}(\xi) + o(\mu_N^{n+2})\]
	as $N\to\infty$.
\end{theorem}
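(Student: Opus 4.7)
The plan is to reduce the pullback formula to an asymptotic for the off-diagonal second derivative of the Schwartz kernel of $\Pi_{\leq N} B \Pi_{\leq N}$ on the diagonal, then apply the semiclassical machinery already used for the $B=I$ case (Proposition \ref{DDPiProp}), and finally evaluate the resulting symbol integral in polar coordinates. For the first step, apply the formula (\ref{IotaEq}) for $\iota_{\leq N}^*\Inner[R]$ with $R = \Pi_{\leq N}B\Pi_{\leq N}$. A direct computation in the basis $\Phi_{\leq N}$ shows that for $X, Y \in T_xM$,
\[
\E_N(\Inner[B])(x)(X,Y) = \sum_{j,k=1}^{d_{\leq N}} \langle B\phi_j,\phi_k\rangle\, d\phi_j(X)\, d\phi_k(Y),
\]
which is precisely $\partial_{X,x}\partial_{Y,y}K_N^B(x,y)\big|_{x=y}$, where $K_N^B$ denotes the Schwartz kernel of $\Pi_{\leq N}B\Pi_{\leq N}$. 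So it suffices to extract the $\mu_N^{n+2}$ asymptotic of this restricted off-diagonal second derivative.

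For the second step, represent $\Pi_{\leq N}=\mathbf{1}_{[0,\mu_N]}(\sqrt{\D})$ via Fourier inversion against a compactly supported mollifier and the Hadamard parametrix for $e^{\pm it\sqrt{\D}}$. The composition rule for FIOs with the zero-order pseudodifferential operator $B$ shows that $K_N^B(x,y)$ can be represented near the diagonal as an oscillatory integral with phase $\langle x-y,\xi\rangle$ and amplitude $b(x,\xi)\mathbf{1}_{|\xi|_{g_0}\leq\mu_N}(\xi)$ modulo symbols of lower homogeneity. A differentiated Fourier Tauberian argument (in the spirit of Zelditch's proof of (\ref{PiIsometryEq})) then allows us to pass from the smoothed to the sharp cutoff while sacrificing only subleading terms; differentiating the oscillatory integral in $x,y$ and then setting $x=y$ brings down a factor $\xi(X)\xi(Y)$ at leading order. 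The upshot is
\[
K_N^B(x,x;X,Y) = (2\pi)^{-n}\int_{\{\xi\in T_x^*M \,:\, |\xi|_{g_0}\leq \mu_N\}} b(x,\xi)\,\xi(X)\xi(Y)\,d\xi + o(\mu_N^{n+2}).
\]

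For the third step, pass to polar coordinates $\xi = r\eta$ with $\eta \in S^*_xM$ and $d\xi = r^{n-1}dr\,dS_{g_0}(\eta)$. Since the principal symbol $b(x,\xi)$ is homogeneous of degree zero for $|\xi|$ large, the bounded-$\xi$ contribution is $O(1)$ and so negligible relative to $\mu_N^{n+2}$, giving
\[
\int_{|\xi|_{g_0}\leq\mu_N} b(x,\xi)\,\xi\otimes\xi\, d\xi = \frac{\mu_N^{n+2}}{n+2}\int_{S^*_xM} b(x,\eta)\,\eta\otimes\eta\, dS_{g_0}(\eta) + o(\mu_N^{n+2}),
\]
and combining with the $(2\pi)^{-n}$ prefactor yields the theorem. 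The principal obstacle is Step 2: the $B=I$ case is the classical H\"ormander pointwise Weyl law, upgraded by Zelditch to handle the two derivatives on the diagonal, but inserting an order-zero pseudodifferential operator between the sharp spectral projectors requires tracking the composition $e^{is\sqrt{\D}}Be^{-it\sqrt{\D}}$ within the FIO calculus, controlling the commutator $[B,\Pi_{\leq N}]$, and verifying that subprincipal terms in the symbol of $B$ as well as the jump discontinuity of the sharp cutoff only contribute at order $o(\mu_N^{n+2})$ to the differentiated diagonal.
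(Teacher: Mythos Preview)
Your outline is broadly on target and Step~1 matches the paper exactly (this is Lemma~\ref{DDENLem} together with (\ref{DDEq})). The divergence is in Step~2, where you propose to analyze the full sandwich $\Pi_{\leq N}B\Pi_{\leq N}$ by tracking the two-parameter FIO composition $e^{is\sqrt{\D}}Be^{-it\sqrt{\D}}$. The paper avoids this entirely by first proving (Lemma~\ref{PiLem}(ii), an adaptation of Widom's commutator argument) that
\[
\DD\bigl(\Pi_{\leq\lambda}B(I-\Pi_{\leq\lambda})\bigr)(x)=o(\lambda^{n+2}),
\]
so one may replace $\Pi_{\leq\lambda}B\Pi_{\leq\lambda}$ by $\Pi_{\leq\lambda}B$. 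This collapses the problem to a \emph{single} wave-group composition $U^tB$, and Lemma~\ref{DDPiBLem} then gives the smoothed asymptotics by exactly the stationary-phase computation you sketch. You mention ``controlling the commutator $[B,\Pi_{\leq N}]$'' as an obstacle, but you don't indicate a mechanism; the paper's mechanism is the iterated-commutator decay $\langle[\sqrt{\D},B]^{(N)}\phi_j,\phi_k\rangle=O(|\lambda_j-\lambda_k|^{-N})$ combined with the derivative Weyl bound (\ref{DDPiDeltaEq}).

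The second place your outline is thin is the Tauberian step. You invoke a ``differentiated Fourier Tauberian argument'' to pass from smoothed to sharp cutoff, but $\DD\Pi_{\leq\lambda}B(x)(X,X)$ is \emph{not} monotone in $\lambda$ when $B$ is a general order-zero operator, so the standard Tauberian lemma does not apply directly. The paper's fix is short but essential: since $|\DD\Pi_{\leq\lambda}B(x)|\le\|B\|\,\DD\Pi_{\leq\lambda}(x)$, replacing $B$ by $B+CI$ with $C>\|B\|$ restores monotonicity, one runs the Tauberian argument on $B+CI$, and then subtracts the already-known asymptotics for $CI$ (Proposition~\ref{DDPiProp}). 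Without this trick or an equivalent, your Step~2 has a genuine gap at the unsmoothed endpoint.

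In summary: same architecture, but the paper's route is lighter because it (i) drops one projector \emph{before} the FIO analysis via the Widom-type lemma, and (ii) handles non-monotonicity by the add-a-constant device rather than by attempting a Tauberian theorem for signed measures.
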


Here $dS_{g_0}$ is the hypersurface volume form on the $g_0$-cosphere $S^*_x M$ with respect to the $g_0(x)$ volume form on $T^*_x M$. We view the right side of the theorem as a weighted average of rank one symmetric tensors $\xi\otimes\xi$.  Each direction $\xi$ of the cosphere is weighted by the principal symbol of the pseudodifferential operator in that direction. When $B$ is the identity the average is unweighted and the theorem reduces to the asymptotic isometry (\ref{PiIsometryEq}) above.

The theorem may be equivalently reformulated in terms of orthonormal bases as follows. For $B$ as in the theorem, $\Inner[(\Pi_{\leq N} B \Pi_{\leq N})^{-1}]$ is a sequence of inner products on $\H_{\leq N}$. Then as in (\ref{ONBEq})
\[\Psi_N = \sqrt{\ToeplitzN{B}}\Phi_{\leq N}\]
is a corresponding sequence of orthonormal bases, where the square root is taken as an operator on $\H_{\leq N}$. The theorem says that as $N \to \infty$ the associated pullbacks of the Euclidean metrics are given by 
\[\Psi_N^*g_E = \frac{\mu_N^{n+2}}{(2\pi)^{n+2}(n+2)}\int_{S^*_x M} b(x,\xi) \xi\otimes\xi\ dS_{g_0}(\xi)  + o(\mu_N^{n+2}).\]

Our next result shows that the integral transform appearing on the right side of Theorem \ref{LimitThm} can be inverted.
\begin{theorem}\label{InverseThm}
	Let $g$ be a Riemannian metric on $M$. Then if $B$ is a positive-definite symmetric order zero pseudodifferential operator with principal symbol
	\begin{equation}
		b(x,\xi) = \frac{dV_{g_0}}{dV_g}||\xi||_g^{-n-2}
		\label{InverseThmSymbolEq}
	\end{equation}
	for $\xi \in S^*M$, we have
	\[\E_N( \Inner[B]) = \frac{\mu_N^{n+2}}{(n+2)(2\pi)^{n}}g + o(\mu_N^{n+2})\]
	as $N\to\infty$.
\end{theorem}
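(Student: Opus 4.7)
The plan is simply to apply Theorem~\ref{LimitThm} with the symbol $b$ specified in (\ref{InverseThmSymbolEq}) and then verify that the integral transform appearing on the right-hand side collapses pointwise to a constant multiple of $g(x)$. Concretely, the theorem reduces to the pointwise identity
\[
I(x):=\int_{S^*_{g_0,x}M}\frac{dV_{g_0}}{dV_g}(x)\,\|\xi\|_g^{-n-2}\,\xi\otimes\xi\,dS_{g_0}(\xi) \;=\;\frac{\Vol(S^{n-1})}{n}\,g(x),
\]
which encodes the fact that the weight in (\ref{InverseThmSymbolEq}) has been engineered precisely to invert the integral transform $B\mapsto \int b\,\xi\otimes\xi\,dS_{g_0}$. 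Once $I(x)$ is so identified, substitution into Theorem~\ref{LimitThm} yields the stated asymptotic.

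To prove the identity, I would work pointwise at a fixed $x$ and linearly identify $T_x^*M$ with $\R^n$ via a $g^*$-orthonormal basis $\{v^1,\ldots,v^n\}$ of $T_x^*M$, so that $g(x)=\sum_i v^i\otimes v^i$. In the corresponding coordinates $y=(y_1,\ldots,y_n)$ the $g$-dual norm is Euclidean, $\|\xi\|_g^2 = |y|^2$, while the $g_0^*$-metric is represented by some symmetric positive-definite matrix $h$, so $\|\xi\|_{g_0}^2 = y^T h y$. A direct computation shows that the pointwise ratio of manifold volume forms satisfies $\tfrac{dV_{g_0}}{dV_g}(x) = (\det h)^{-1/2}$, which is the reciprocal of the corresponding ratio of fiber volumes on $T_x^*M$.

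The evaluation of $I(x)$ proceeds by two successive substitutions. First, $z=h^{1/2}y$ carries the $g_0$-cosphere $\{y^T h y = 1\}$ onto the Euclidean sphere $S^{n-1}$ together with its standard surface measure. The resulting integrand is homogeneous of degree $-n$ in $z$, so multiplying by $\|z\|^n\equiv 1$ and using the identity $\int_{S^{n-1}} f\,dS = n\int_{B^{n-1}} F\,dz$ (valid for a degree-zero homogeneous extension $F$) replaces the spherical integral by $n$ times an integral over the unit ball. Undoing the first substitution by $u=h^{-1/2}z$ lands on the ellipsoid $\{u^T h u\le 1\}$ in Euclidean $u$-coordinates, and polar coordinates $u=r\omega$ factor the problem into angular and radial parts; the radial integral $\int_0^{1/\sqrt{\omega^T h\omega}}r^{n-1}\,dr$ produces exactly the $(\omega^T h\omega)^{-n/2}$ factor needed to annihilate the remaining $h$-dependence coming from $\|\xi\|_g^{-n-2}$.

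What remains is the elementary angular integral $\int_{S^{n-1}}\omega_i\omega_j\,dS(\omega) = \frac{\Vol(S^{n-1})}{n}\delta_{ij}$, multiplied by the Jacobian $\sqrt{\det h}$ from $u=h^{-1/2}z$, and this Jacobian cancels the conformal prefactor $(\det h)^{-1/2}$ from $\tfrac{dV_{g_0}}{dV_g}(x)$. Recognising $\sum_i v^i\otimes v^i = g(x)$ then gives the identity for $I(x)$. No new semiclassical input is required beyond Theorem~\ref{LimitThm}; the only genuine obstacle is careful bookkeeping of Jacobians and surface measures through these substitutions, particularly the inverse relationship between $dV_{g_0}/dV_g$ on $M$ and the analogous volume ratio on the fibers $T_x^*M$.
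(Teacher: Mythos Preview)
Your proposal is correct. The reduction to the pointwise identity
\[
\int_{S^*_{x}M}\frac{dV_{g_0}}{dV_g}\,\|\xi\|_g^{-n-2}\,\xi\otimes\xi\,dS_{g_0}(\xi)=\frac{\Vol(S^{n-1})}{n}\,g(x)
\]
via Theorem~\ref{LimitThm} is exactly what the paper does, and both arguments ultimately land on Lemma~\ref{TensorIntegralLem} (the elementary moment $\int_{S^{n-1}}\omega_i\omega_j\,dS=\frac{\Vol(S^{n-1})}{n}\delta_{ij}$). The difference is purely in how the identity itself is established.

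The paper works in $g_0$-normal coordinates at $x$ (so $g_0(x)=\mathrm{Id}$, $g(x)=Q^*Q$) and pulls the known integral $\int_{{}^gS^*_xM}\xi\otimes\xi\,dS_g$ over to the $g_0$-cosphere through the radial projection $\xi\mapsto\xi/\|\xi\|_g$, computing $f^*dS_{g_0}=\frac{dV_{g_0}}{dV_g}\|\eta\|_g^{-n}dS_{g_0}$ directly from the differential of $f$ and the formula $dS=\eta\lrcorner dV$. You instead work in $g^*$-orthonormal coordinates on $T_x^*M$ (so $g$ is Euclidean and $g_0^*$ is a matrix $h$), linearize the $g_0$-cosphere to the round sphere, and then exploit the degree-$(-n)$ homogeneity of the integrand to pass through a ball integral and back via polar coordinates. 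Your route avoids computing the pushforward of the radial projection, at the cost of the extra ball$\to$ellipsoid detour; the paper's route is one change of variables shorter but requires a small differential-geometric calculation. Both are valid and of comparable length; the paper's version makes the geometric origin of the weight $\|\xi\|_g^{-n-2}$ (namely $\|\xi\|_g^{-2}$ from the tensor and $\|\xi\|_g^{-n}$ from the surface-measure Jacobian) slightly more transparent.
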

%STEVE: You used this notation--due to Donaldson-- earlier on in the K\"ahler context. You need to explain carefully if it is the same map here. Make a detailed comparison. 
In particular the Bergman metrics are dense in the space of Riemannian metrics.  Continuing the analogy to the K\"ahler case, we will define (\ref{HilbDef}) a sequence of maps $\Hilb_N$ taking a Riemannian metric to an inner product 
\[\Hilb_N : \Met(M) \to \I(\H_{\leq N}^*)\]
and satisfying (cf. Corollary \ref{HilbCor}) 
\[\E_N \circ \Hilb_N (g) = g + o(1).\] By Theorem \ref{InverseThm} this amounts to choosing a quantization of the symbol appearing in (\ref{InverseThmSymbolEq}).

Given the $\Hilb_N$ maps we can induce a Riemannian metric on $\Met(M)$ by pulling back the symmetric space metric on $\B_N$.  Namely we define	the length of $\dot g$ in the tangent space to $g$ on $\Met(M)$ by
\begin{equation}
	||\dot g||^2 = \lim_{N\to\infty} \mu_N^{-n}||D_g \Hilb_N(\dot g)||^2.
	\label{MetMetricEq0}
\end{equation}
It is not \textit{a priori} clear that the limit exists or that the limiting metric is positive-definite. We have the following result.
\begin{theorem}\label{MetThm}
	The induced metric on $\Met(M)$ is positive-definite and given by
	\[||\dot g||^2 = \frac{1}{4n(2\pi)^{n}}\int_{S^*M} \left(\Tr_g {\dot g}+ (n+2)\frac{\langle g^{-1}\dot g g^{-1}\xi, \xi\rangle}{||\xi||^2_g}\right)^2 dS_{g_0}(x,\xi).\]
	 Moreover the metric is independent of the choice of quantization in the definition of the $\Hilb$ map.
\end{theorem}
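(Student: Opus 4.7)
The plan is to unwind the pullback metric (\ref{MetMetricEq0}) using the trace asymptotic already developed for Theorem \ref{LimitThm}. Let $B_g$ denote an order-zero symmetric positive $\Psi$DO with principal symbol $b_g$ as in (\ref{InverseThmSymbolEq}). Theorem \ref{InverseThm} together with the degree-one homogeneity of $\E_N$ in its argument forces $\Hilb_N(g) = c_N \Inner[B_g]$ for the $g$-independent scalar $c_N = (n+2)(2\pi)^n\mu_N^{-n-2}$, so since overall scalars cancel in the bi-invariant symmetric space norm $\|\dot R\|^2_R = \Tr((R^{-1}\dot R)^2)$ on $\I(\H_{\leq N}^*)$ the calculation reduces to $\|D_g\Hilb_N(\dot g)\|^2 = \Tr((B_g^{-1}\dot B_g)^2)$, where $\dot B_g = D_g(\ToeplitzN{B_g})(\dot g)$.

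The key identification is symbolic. By the composition and inversion rules of the classical calculus, $B_g^{-1}\dot B_g$ is of order zero with principal symbol $b_g^{-1}\dot b_g = \partial_t \log b_g|_{t=0}$ along $g(t) = g + t\dot g$. A chain-rule computation using $\partial_t \log\det g = \Tr_g\dot g$ and $\partial_t(g^{ij}\xi_i\xi_j) = -\langle g^{-1}\dot g g^{-1}\xi, \xi\rangle$ gives
\[
\partial_t \log b_g = \pm\tfrac{1}{2}\Bigl(\Tr_g\dot g + (n+2)\tfrac{\langle g^{-1}\dot g g^{-1}\xi, \xi\rangle}{\|\xi\|^2_g}\Bigr),
\]
the overall sign depending on the convention for the identification $\H_{\leq N}\cong\H_{\leq N}^*$ and in any case squaring away. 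Squaring and inserting into the local Weyl asymptotic
\[
\Tr(\ToeplitzN{A}) = \tfrac{\mu_N^n}{n(2\pi)^n}\int_{S^*M}a(x,\xi)\,dS_{g_0}(x,\xi) + o(\mu_N^n)
\]
(for $A$ zeroth order with principal symbol $a$, which is the form of the local Weyl law already used in Theorem \ref{LimitThm}), applied to $A = (B_g^{-1}\dot B_g)^2$ whose principal symbol is $(\partial_t \log b_g)^2$, and dividing by $\mu_N^n$, produces the limit in the statement.

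Independence from the choice of quantization is automatic at this level: two quantizations of the same principal symbol differ by a $\Psi$DO of order $-1$, whose contribution to the trace of $(B_g^{-1}\dot B_g)^2$ is $o(\mu_N^n)$. Positivity follows from the square in the integrand: vanishing forces $\Tr_g\dot g(x) + (n+2)\langle g^{-1}\dot g g^{-1}\xi, \xi\rangle/\|\xi\|_g^2 = 0$ for every $(x,\xi) \in S^*M$; since the first summand is $\xi$-independent and the second is a ratio of positive quadratic forms in $\xi$, polarization forces $\dot g$ to be pointwise proportional to $g$, say $\dot g = c(x)g$, and substituting gives $-2(n+1)c = 0$, whence $\dot g \equiv 0$.

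The main technical obstacle is justifying the symbolic manipulation after truncation: the inverse $B_g^{-1}$ in $\Tr((B_g^{-1}\dot B_g)^2)$ denotes inversion on the finite-dimensional space $\H_{\leq N}$, and one must match this with the global inverse of $B_g$ (which exists because $B_g$ is positive) modulo operators whose contribution to the truncated trace is $o(\mu_N^n)$. This is the same class of symbolic remainder estimate underlying the proofs of Theorems \ref{LimitThm} and \ref{InverseThm}, applied here to the square of $B_g^{-1}\dot B_g$.
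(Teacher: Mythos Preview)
Your approach is essentially the paper's: both compute the logarithmic derivative of the principal symbol of $\Hilb$, feed it into a trace asymptotic for truncated operators, and identify the key technical step as matching $(\Pi_{\leq N}B_g\Pi_{\leq N})^{-1}$ with $\Pi_{\leq N}B_g^{-1}\Pi_{\leq N}$ modulo $o(\mu_N^n)$ in trace norm. The paper makes this explicit by invoking the Szeg\H{o} limit theorem for products $\Tr[\Pi_{\leq N}B_1\Pi_{\leq N}\cdots\Pi_{\leq N}B_k\Pi_{\leq N}]$ and then proving the inverse substitution directly from Lemma~\ref{PiLem}(\ref{PiLemi}). Your version phrases it as the local Weyl law applied to the single operator $A=(B_g^{-1}\dot B_g)^2$, but since the object you are actually tracing is a product of truncations (and an inverse of a truncation), you need both the product and the inverse replacements; that is exactly the Szeg\H{o} machinery. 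Your acknowledgement of this ``main technical obstacle'' is on target, but you should name the tool rather than gesture at Theorems~\ref{LimitThm} and~\ref{InverseThm}, whose proofs use the pointwise estimate Lemma~\ref{PiLem}(\ref{PiLemii}) rather than the trace-norm estimate (\ref{PiLemi}) that is relevant here.

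One genuine difference is the positive-definiteness argument. The paper observes that $|\Tr g\tilde g|\leq n\sup_\xi|\langle\tilde g\xi,\xi\rangle|/\|\xi\|_g^2$, which is strictly less than $(n+2)\sup_\xi|\cdot|$ unless $\tilde g(x)=0$, ruling out identical vanishing of the integrand. You argue instead that if the sum vanishes for all $\xi$ then $\langle\tilde g\xi,\xi\rangle/\|\xi\|_g^2$ is constant in $\xi$, forcing $\dot g$ to be a pointwise scalar multiple of $g$, and then substitute. Both are valid; yours is arguably more direct. (The $N$-dependent scalar $c_N$ you introduce is not how the paper defines $\Hilb_N$---there $\Hilb(g)$ is a fixed operator with no $N$-dependence---but as you note, any $g$-independent scalar cancels in the symmetric-space norm, so this is harmless.)
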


In our context it is somewhat more natural to use the induced metric on $\Met(M)$ to measure lengths of tangent vectors which come from identifying Riemannian metrics with the metrics they induce on the cotangent bundle. Namely let $\tilde g$ be a tangent vector to $g^{-1}$ in the latter, that is $\tilde g$ is a symmetric contravariant 2-tensor. Then the identification takes $\tilde g$ to the tangent vector to $g \in \Met(M)$ given by 
	\[\left.\frac{\d}{\d\e}\right|_{\e=0} (g^{-1} + \e\tilde g)^{-1} = -g\tilde g g\]
where the right hand side is interpreted in local coordinates as multiplication of matrices and then easily seen to be invariant under change of coordinates.  In this notation Theorem \ref{MetThm} reads
	\[||\tilde g||^2 = \frac{1}{4n(2\pi)^n}\int_{S^*M} \left(\Tr{g\tilde g} + (n+2)\frac{\langle \tilde g \xi, \xi\rangle}{||\xi||^2_g}\right)^2 dS_{g_0}(x, \xi).\]

Lastly we return to bases for the individual eigenspaces $\H_N$. It is only feasible for us to analyze such maps in the case that $(M,g_0)$ is the round sphere because in that case the spectral projector $\Pi_N$ is a Fourier integral operator.

Analogously to $\iota_{\leq N}$ we define $\iota_N$ for individual eigenspaces 
\begin{align*}
	\iota_N: M &\to \H_N^* \\
	x &\mapsto (\phi \mapsto \phi(x))
\end{align*}
and prove the following theorem.

\begin{theorem}\label{SphereThm}
	Suppose $M = S^n$ is the sphere and $g_0$ is the round metric. Let $B$ be an order zero pseudodifferential operator which is positive-definite and symmetric on $\H_{N}$ for $N$ sufficiently large.  Then we have
	\[\iota_N^*\langle B\cdot, \cdot\rangle(x) = \left(\int_{S^*_xS^n} \xi\otimes\xi \int_{S^1} b\circ G^t(x,\xi)\ dt\ dS_{g_0}(\xi)\right)\left(\frac{N}{2\pi}\right)^{n+1} + O(N^{n})\]
	as $N\to\infty$ where $G^t$ is the periodic geodesic flow of $g_0$.
\end{theorem}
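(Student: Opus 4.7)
The plan is to reduce the pullback to a mixed second derivative of a Schwartz kernel on the diagonal, exploit the periodicity of the geodesic flow on $S^n$ to rewrite that kernel via a geodesically averaged symbol, and finally invoke the near-diagonal asymptotics of $\Pi_N$ as in Proposition~\ref{DDPiProp}. By the same computation in the main setup that yields $\iota_{\leq N}^*\Inner = \Phi_{\leq N}^* g_E$, one obtains
\[\iota_N^*\langle B\cdot,\cdot\rangle(x)(X,Y) = \sum_{j,k}\langle B\phi_j,\phi_k\rangle\, d\phi_j(X)\, d\phi_k(Y) = (d_x\otimes d_y)K_N(x,y)\big|_{y=x}(X,Y),\]
where $K_N$ is the Schwartz kernel of $\Pi_N B\Pi_N$. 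It therefore suffices to analyze these mixed derivatives on the diagonal.

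The crucial sphere-specific ingredient is that the shifted operator $A := \sqrt{\D + ((n-1)/2)^2} - (n-1)/2$ has integer spectrum with the same eigenspaces $\H_N$ as $\D$, so its wave group $U(t) := e^{itA}$ is $2\pi$-periodic and
\[\Pi_N = \frac{1}{2\pi}\int_0^{2\pi}e^{-iNt}U(t)\,dt.\]
Since $A - \sqrt{\D}$ is a pseudodifferential operator of order $-1$, Egorov's theorem gives that $B_t := U(t)BU(-t)$ is a pseudodifferential operator of order zero with principal symbol $b\circ G^t$. Using $U(t)B = B_t U(t)$ and $U(t)\Pi_N = e^{iNt}\Pi_N$, the phases cancel and
\[\Pi_N B\Pi_N = \frac{1}{2\pi}\int_0^{2\pi}B_t\,dt \cdot \Pi_N = \bar B\Pi_N,\]
where $\bar B$ has principal symbol $\bar b(x,\xi) = \frac{1}{2\pi}\int_0^{2\pi}b\circ G^t(x,\xi)\,dt$.

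It remains to compute the asymptotics of $(d_x\otimes d_y)(\bar B\Pi_N)(x,y)|_{y=x}$. In geodesic normal coordinates centered at $x$, the Hadamard parametrix for $U(t)$ near $t=0$ yields, as in the proof of Proposition~\ref{DDPiProp}, the near-diagonal representation
\[\Pi_N(x,y) = \frac{1}{(2\pi)^n}\int_{T^*_xS^n}e^{i\langle\exp_x^{-1}(y),\xi\rangle}\,\delta(\|\xi\|_{g_0} - N)\,d\xi + O(N^{n-2}).\]
Applying $\bar B$ as a pseudodifferential operator in $x$ multiplies the integrand by $\bar b(x,\xi)$ to leading order, and differentiating $e^{i\langle\exp_x^{-1}(y),\xi\rangle}$ once in each of $x$ and $y$ at the diagonal produces $\xi\otimes\xi$. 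Passing to polar coordinates $\xi = r\eta$ and using degree-zero homogeneity of $\bar b$ then gives
\[(d_x\otimes d_y)(\bar B\Pi_N)(x,x) = \frac{N^{n+1}}{(2\pi)^n}\int_{S^*_xS^n}\eta\otimes\eta\,\bar b(x,\eta)\,dS_{g_0}(\eta) + O(N^n),\]
which after substituting the expression for $\bar b$ and identifying the $t$-integral over $[0,2\pi]$ with that over $S^1$ is precisely the theorem.

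The main technical obstacle is this final asymptotic, specifically controlling the error to $O(N^n)$. This requires carefully tracking the subprincipal contributions in the Hadamard parametrix expansion and verifying that the contributions to $\Pi_N(x,y)$ from nonzero stationary values of the $t$-phase -- which correspond to the antipodal focal structure of the round sphere and produce Maslov phases -- are supported away from the diagonal and hence invisible for the pointwise mixed-derivative computation at $y=x$.
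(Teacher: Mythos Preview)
Your argument is correct and takes a genuinely different route from the paper. The paper treats $\Pi_N$ as a semiclassical Lagrangian distribution on $S^n\times S^n$ associated to the Lagrangian $\Gamma=\{((x,\xi),G^t(x,\xi)):t\in S^1\}$ and then runs the full symbol calculus from \cite{ZelditchZoll}: it computes $\sigma(\Pi_{N+k}B\Pi_N)$ via the composition law, applies $d_x\otimes d_y$ at the symbol level, pulls back to the diagonal, and finally pins down the overall constant by matching against the known case $B=I$, $k=0$. The geodesic average over $S^1$ emerges there from the composition formula $a*b(t,x,\xi)=\int_{S^1}a(s,x,\xi)b(t-s,G^s(x,\xi))\,ds$ for symbols on $\Gamma$.

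Your key step is instead the operator identity $\Pi_N B\Pi_N=\bar B\,\Pi_N$, obtained from periodicity of $U(t)=e^{itA}$ together with Egorov; the $S^1$-average appears directly as the principal symbol of $\bar B$. This reduces the problem to the asymptotics of $\DD(\bar B\,\Pi_N)$, which is exactly the computation behind Lemma~\ref{DDPiBLem} specialized to a single eigenspace on the sphere, so the remaining work is of the same flavor as Proposition~\ref{DDPiProp}. (Note that $\bar B$ in fact commutes with $U(t)$ and hence with every $\Pi_N$, so $\bar B\,\Pi_N=\Pi_N\bar B=\Pi_N\bar B\,\Pi_N$, which makes the link to that lemma even cleaner.) What your approach buys is conceptual transparency and less machinery: one does not need the full Lagrangian-distribution calculus, and the constant comes out directly rather than by a matching argument. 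What the paper's approach buys is uniformity: the same symbol computation yields the off-diagonal blocks $\DD\Pi_{N+k}B\Pi_N$ for all $k$, which the paper then sums to obtain the sharpened $O(N^{n+1})$ remainder in Theorem~\ref{LimitThm} on the sphere. Your averaging identity can be adapted to that case as well (replace $\bar B$ by $\bar B_k:=\tfrac{1}{2\pi}\int_0^{2\pi}e^{-ikt}B_t\,dt$), but you have not carried this out. The technical caveat you flag---controlling contributions to $\Pi_N(x,y)$ from $t$ away from $0\bmod 2\pi$ near the diagonal---is real but standard: for $y$ near $x$ and $t$ bounded away from $0$ on $S^1$, the wavefront set of $U(t)$ does not meet the conormal to the diagonal, so those pieces are smooth and contribute $O(N^{-\infty})$ after the oscillatory $e^{-iNt}$ integration.
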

This generalizes Takahashi's result for orthonormal bases on the round sphere. Moreover, the theorem leads to a different proof of Theorem $\ref{LimitThm}$ with improvement in remainder to $O(\mu_N^{n+1})$ in the case of the round sphere.

\subsection{Analogy and Motivation}
Here we discuss the analogy between K\"ahler and Riemannian Bergman metrics and review some of the applications of Bergman metrics in K\"ahler geometry which we hope will help to motivate our Riemannian construction.

The basic connection between K\"ahler and Riemannian Bergman metrics is that the space of sections of high power of a line bundle $H^0(L^k)$ has been replaced by the space of eigenfunctions up to some degree $\H_{\leq N}$. Note that the definition of K\"ahler Bergman metrics depend only on the polarisation $(X,L)$. A reference metric $\omega_0$ is used only to identify the Bergman space with $\textrm{SL}(d_k+1)/\textrm{SU}(d_k+1)$. In contrast, eigenfunctions do depend on the choice of reference metric $g_0$ hence so do Riemannian Bergman metrics.

According to Tian, the philosophy of Bergman metrics in K\"ahler geometry is to use the finite dimensional geometries of projective spaces and $\GL(d_k+1)/\textrm{U}(d_k+1)$ to approximate the infinite dimensional space of K\"ahler metrics $\K$. This was done most famously by Donaldson \cite{Donaldson1}, \cite{Donaldson2} to relate the existence of K\"ahler metrics of constant scalar curvature to the existence of balanced projective embeddings.
%Later Fine \cite{Fine} defined a sequence of balancing flows on the Bergman spaces that under certain conditions converge to the Calabi flow, which is the parabolic analog of the constant scalar curvature K\"ahler problem.

And so naturally our hope in developing Riemannian Bergman metrics is that they will serve an analogous role. Namely that we might use the finite dimensional geometry of Euclidean space and $\textrm{O}(d_{\leq N})\backslash\GL(d_{\leq N})$ to better understand the infinite dimensional space of Riemannian metrics $\Met(M)$.  The first step in that direction is Theorem \ref{MetThm} where we compute the Riemannian metric on $\Met(M)$ induced by the Bergman spaces.

%STEVE: mention Ricci flow as direction for future work

%STEVE: Mention Fano (Tosatti). Geometry of $\K$ (Zeldtich, Song, Rubinstein)

% I don't think you should quote these FKZ papers yet, but say that a very speculative possibility is to approximate Sheffield's inf dim quantum gravity= Liouville using these Bergman spaces. But emphasize that it is just a speculation. 
Zelditch and collaborators \cite{ZelditchRandom} proposed a way of rigorously defining and calculating path integrals over the space of K\"ahler metrics via limits of integrals over the finite dimensional K\"ahler Bergman spaces.  We speculate that our construction might be similarly used to give an approximation to Sheffield's Liouville quantum gravity \cite{Sheffield}.

\subsection{Outline}
In \S\ref{DDSection} we lay the groundwork for analyzing Bergman metrics via spectral projection kernels.  In \S\ref{IsometrySection} we recall Zelditch's asymptotic isometry. The proof of our asymptotic expansion, Theorem \ref{LimitThm}, is found in \S\ref{ResultsSection}. Its inversion, Theorem \ref{InverseThm}, is proven in \S\ref{HilbSection}. Then in \S\ref{MetricSection} we prove Theorem \ref{MetThm} calculating the induced metric on the space of metrics. Lastly in \S\ref{SphereSection} we look at individual eigenbasis maps on the round sphere and prove Theorem \ref{SphereThm}.

\section{The operator \DD}\label{DDSection}
First we prove the aforementioned fact that the eigenbasis maps are eventually embeddings. 
\begin{prop}\label{EmbeddingProp}
	For $N$ sufficiently large, if $\Psi$ is a basis for $\H_{\leq N}$ then $\Psi: M \to \R^{d_{\leq N}}$ is (i) an immersion and (ii) injective.
\end{prop}
\begin{proof}
	Because any two bases are related by a non-degenerate linear transformation, it suffices to consider $\Psi = \Phi_{\leq N}$, the orthonormal basis.

	(i) Note that if $\Phi_{\leq N}$ is not an immersion for some $N$ then neither is $\Phi_{\leq M}$ for all $M < N$.
	So supposing $\Phi_{\leq N}$ is not an immersion for arbitrarily large $N$, then for each $N$ there is a unit tangent directions $X_{N} \in S_{x_{N}}$ such that
	\[d\phi_j(x_N)(X_N) = 0 \textrm{ for all } j \leq N.\]
	Then since $SM$ is compact there is a subsequence of $\{X_N\}$ converging to $X \in T_x M$. So we have
	\[d\phi_j(x)(X) = 0 \textrm { for all } j.\]
	But this is a contradiction because $\{\phi_j\}$ is a basis for $C^\infty(M)$ in the $C^\infty$ topology (\cite{Shubin}, Proposition 20.1).
	Hence $\Phi_{\leq N}$ is an immersion for $N \gg 0$.

	(ii) Similarly, if $\Phi_{\leq N}$ is not injective for some $N$  then neither is $\phi_j$ for all $j \leq N$. So supposing $\Phi_{\leq N}$ is not injective for arbitrarily large $N$, we have for each $N$ points $x_N,y_N \in M$ such that 
	\[\phi_j(x_N) = \phi_j(y_N) \textrm{ for all } j \leq N.\]
	Then since $M$ is compact there are subsequences of $\{x_N\}$ and $\{y_N\}$ converging to $x$ and $y$, respectively. So
	\[\phi_j(x) = \phi_j(y) \textrm{ for all } j.\]
	If $x \neq y$ this contradicts the fact that $\{\phi_j\}$ is a basis.

	If $x = y$ let us work in a coordinate patch $U$ containing $x$. Then for $N$ sufficiently large $x_N, y_N \in U$. For such $N$ view $y_{N} - x_{N}$ as a tangent vector at $x_{N}$ and define 
	\[X_{N} := \frac{y_{N} - x_{N}}{||y_{N} - x_{N}||_{g_0(x_{N})}} \in S_{x_N} M.\]
	Then by compactness of $SM$ there is a subsequence $\{N_k\}$ such that $X_{N_k}\to X \in T_x X$.

	We invoke the mean value theorem: for any $k$ and every $j\leq N_k$ there is a point $z_{N_k,j}$ on the line from $x_{N_k}$ to $y_{N_k}$ such that $\phi_j$ has derivative vanishing at that point along that line: 
	\[d\phi_j(z_{N_k, j})(X_{N_k}) = 0.\]
	
	For any $j$, as $k\to\infty$ we have $(z_{N_k, j}, X_{N_k}) \to (x,X)$. So
	\[ 0 = \lim_{k\to\infty} d\phi_j(z_{N_k, j})(X_{N_k}) = d\phi_j(x)(X)\]
    for all $j$, again contradicting the fact that $\{\phi_j\}$ is a basis.

\end{proof}

Note the formula for the pullback of the Euclidean metric by a map $\Psi: M \to \R^{d_{\leq N}}$ is given by
\[\Psi^*g_E = \sum_{j=1}^{d_{\leq N}} d\Psi_j \otimes d\Psi_j.\]
It will be useful for us to have an alternate expression for this.  To that end we define a differential operator which takes a function on the product $M\times M$ and produces a covariant 2-tensor on $M$. That is let
\[\DD: C^\infty(M\times M) \to \Gamma(T^*M\otimes T^*M)\]
by
\[K(x,y) \mapsto d_x \otimes d_y K(x,y) |_{x=y}.\]

Given $X,Y \in T_p M$ we have
\[\DD K(p)(X,Y) = d_x K(p)(X) \cdot d_y K(p)(X)\]
or in local coordinates
\[\DD K(p) = \sum_{i,j=1}^n \frac{\d}{\d x_i}\frac{\d}{\d y_j}K(p,p) dx^i\otimes dx^j.\]

Note that if $K$ is a symmetric function then $\DD K$ is a symmetric tensor.
Moreover if $K$ is the kernel of an operator $Q$ on $L^2(M)$ we have
\[K(x,y) = \sum_{i,j} q_{ij}\phi_i(x)\phi_j(y)\] where
\[q_{ij} = \langle Q\phi_i, \phi_j\rangle\]
are the matrix entries of $Q$ with respect to the basis $\{\phi_j\}$. When the operator is smoothing (that is $K$ is smooth) we can apply $\DD$. In particular we have
\begin{equation}
	\DD \Pi_{\leq N} B \Pi_{\leq N} = \sum_{i,j=1}^{d_{\leq N}} b_{ij}d\phi_i \otimes d\phi_j.
	\label{DDEq}
\end{equation}
Our interest in $\DD$ stems from the following observation.
\begin{lem}
	\label{DDENLem}
	Let $Q \in \GL(\H_{\leq N})$. Then we have
	\[\DD Q^*Q = (Q\Phi_{\leq N})^*g_E\]
	where we view $Q^*Q \in \GL(\H_{\leq N})$ as an operator kernel.
	\begin{proof}
		This is a straightforward calculation.  Note that both sides of the claim are invariant under orthogonal transformation of $Q$ so let $(q_{ij})$ be a positive definite symmetric representative for $Q \in \textrm{O}(\H_{\leq N}) \backslash \GL(\H_{\leq N})$.  Then
		\begin{align*}
			(Q\Phi_{\leq N})^*g_{\E} &= \sum_{i=1}^{d_{\leq N}} d(Q\Phi_{\leq N})_i\otimes d(Q\Phi_{\leq N})_i \\
			&= \sum_{i=1}^{d_{\leq N}} \left(\sum_{j=1}^{d_{\leq N}}q_{ij}d\phi_j\right)\otimes\left(\sum_{k=1}^{d_{\leq N}}q_{ik}d\phi_k\right) \\
			&= \sum_{i,j,k=1}^{d_{\leq N}} q_{ij}q_{ik} d\phi_j\otimes d\phi_k \\
			&= \sum_{i,j,k=1}^{d_{\leq N}} q_{ji}q_{ik} d\phi_j\otimes d\phi_k \\
			&= \sum_{j,k=1}^{d_{\leq N}} (Q^*Q)_{jk} d\phi_j\otimes d\phi_k \\
			&= \DD Q^*Q
		\end{align*}
		as desired.
	\end{proof}
\end{lem}

Note that if $B$ is a positive definite and symmetric (pseudodifferential) operator on all of $L^2(M)$ we can apply the lemma to $\Pi_{\leq N}B\Pi_{\leq N}$ to obtain
\begin{equation}
	\DD \Pi_{\leq N}B\Pi_{\leq N} = (\sqrt{\Pi_{\leq N} B\Pi_{\leq N}}\Phi_{\leq N})^*g_E
	\label{}
\end{equation}

\section{Asymptotic Isometry}\label{IsometrySection}
We begin by recalling the proof of (\cite{ZelditchWaves}, Proposition 2.3) that the orthonormal basis maps are asymptotically isometric.  Note that the original result assumed aperiodicity of the geodesic flow in order to improve the remainder term.  Our proof starts with a slightly more general calculation that will be central to the Bergman asymptotics of the next section. 

Fix $\rho(\lambda)$ to be a positive Schwartz function with Fourier transform $\hat\rho(t)$ supported where $|t| < t_0$ where $t_0$ is the injectivity radius of $(M,g_0)$ and $\hat\rho(0)=1$.  Such a function exists (\cite{Hormander3}, Section 17.5) and is the staple of the Fourier Tauberian arguments which are standard in spectral asymptotics.

\begin{lem}\label{DDPiBLem}
	\[\DD[(\rho * d_\lambda \PiLeq\lambda B)(\lambda)](x) = (2\pi)^{-n}\int_{S^*_xM}b(x,\xi) \xi\otimes\xi\ \dS_{g_0}(\xi)
	\lambda^{n+1} + O(\lambda^{n})\]

	\begin{proof}
		This is a standard calculation with a parametrix for the wave group and the method of stationary phase. Let $U^t = \exp{-it\sqrt{\D}}$ and notice that
		\begin{equation}
			d_\lambda \DD \PiLeq\lambda B(x) = \sum_{j} \delta(\lambda - \lambda_j)\langle B\phi_j, \phi_k\rangle  d\phi_j(x)\otimes d\phi_k(x) 
			= (2\pi)^{-1/2}\F^{-1}_{t \to \lambda} \DD U^tB
			\label{}
		\end{equation}
		and so
		\begin{equation}
			\rho * d_\lambda \DD \PiLeq\lambda B(x) = \int_{\R} e^{it\lambda} \hat\rho(t)  \DD[U^t B](x) dt.
			\label{}
		\end{equation}

		Let $\tilde U^t$ be the Fourier integral operator parametrix given by H\"ormander \cite{Hormander4}.  That is for $|t| < t_0$ within the injectivity radius of $(M,g_0)$ we have
		\begin{equation}\label{ParametrixEq}
			\tilde U^t(x,y) = (2\pi)^{-n}\int_{T^*_x S^n}e^{i\phi(t,x,y,\eta)}a(t,x,y,\eta)d\eta
		\end{equation}
		where $d\eta$ is the Euclidean volume form with respect to the $g_0(x)$ inner product on $T^*_x M$.  For $x \in M$ let
		\[\exp_x: T_x^* M \to M\]
		denote the Riemannian exponential map and
		\[G^t: T^*M \to T^* M\] denote geodesic flow on the cotangent bundle.
		With this notation, the phase and amplitude in \ref{ParametrixEq} may be written as
		\begin{align}
			\phi(t,x,y,\eta) &= \langle \exp^{-1}_x y, \eta\rangle - t|\eta|_{g_0(x)} \\
			a(t,x,y,\eta) &= 1 + \sum_{j=1}^{\infty}\langle \eta\rangle_{g_0(x)}^{-j} a_{-j}(t,x,y)
		\end{align}
		for some smooth functions $a_j$.  We will use the following facts:
		\begin{equation}
			\begin{aligned}
				&\phi(t,x,\exp_x t\eta, \eta) = 0 \textrm{ for } |\eta|_{g_0}=1 \\
			&d_y\phi(t,x,x,\eta) = \eta \\
			&d_\eta \phi(t,x,y,\eta) = \exp_x^{-1}y - t\frac{\eta}{|\eta|_{g_0}} \\
			&d_t \phi(t,x,y,\eta) = -|\eta|_{g_0} \\
			&d_y \phi(t,x,\exp_x t\eta, \eta) = -G^t(\eta).
			\end{aligned}
		\label{DPhaseEq}
		\end{equation}
		By parametrix we mean that 
		\[\ U^t(x,y)- \tilde U^t(x,y) \in C^\infty( (-t_0, t_0) \times M\times M).\] 

		So since the Fourier transform maps Schwartz functions to Schwartz functions, we can substitute $\tilde U^t$ for $U^t$ and substitute $\eta = \lambda r \xi$ to get
		\[\rho * d_\lambda \DD \PiLeq\lambda B(x) = (2\pi)^{-n-1} \lambda^{n+1}\int_{\R}\int_{T^*_x M} \rho(t) e^{i\lambda(t+\phi(t,x,x,\xi))}  \tilde a(t,x,x,\lambda r \xi) r^{n-1} dS_{g_0}(\xi) dr dt \]
		modulo $O(\lambda^{-\infty})$. The leading term in $\lambda$ of the symbol comes from differentiating the phase and so we have
		\[\tilde a_0(0,x,x,\xi) = b(x,\xi) \xi\otimes \xi\]
		We apply stationary phase and note that the phase is the same as in the proof of the Weyl law. That is for fixed $\xi \in S^*_x M$ the unique stationary point is $r=1$ and $t=0$ and it is non-degenerate with Hessian signature zero. Hence we have
		\[\rho * d_\lambda \DD \PiLeq\lambda B(x) = (2\pi)^{-n}\lambda^{n+1}\int_{S^*_x M} b(x,\eta)\xi\otimes\xi\ dS_{g_0}(\xi) +O(\lambda^{n})\]
		as desired.
	\end{proof}
\end{lem}

We will need the following simple lemma concerning the average of rank one tensors over a sphere.

\begin{lem}\label{TensorIntegralLem}
	Let $(V,g)$ be an $n$-dimensional (real) inner product space and ${^g}S^*V \subset V^*$ the corresponding cosphere.  Then
\[\int_{{^g}{S^*V}} \xi\otimes\xi\ \dS_g(\eta) = \frac{\Vol(S^{n-1})}{n}g.\]
\begin{proof}
In normal coordinates $\{x^j\}$ where $g$ is the identity matrix we have
		\[g = \sum dx^j\otimes dx^j\]
		\[\eta\otimes\eta = \sum \xi_j\xi_k dx^j\otimes dx^k\]
		and note that for $j\neq k$
		\[\int_{{^g}S^*V}\xi_j\xi_k\ \dS_g(\xi) = 0\]
		while when $j=k$ we have 
		\[\int_{ {^g}S^*V} \xi_j\xi_j\ \dS_g(\xi) = \frac{1}{n}\int_{{^g}S^*V}|\xi|^2\ \dS_g(\xi) = \frac{\Vol(S^{n-1})}{n}.\]
\end{proof}
\end{lem}

Now we can deduce the limit of Bergman metrics corresponding to \textit{orthonormal} bases.
\begin{prop}[\cite{ZelditchWaves}, Proposition 2.3] \label{DDPiProp}
	\[E_N(\Inner) = \frac{\Vol(S^{n-1})}{n(n+2)(2\pi)^{n}}g_0(x)\mu_N^{n+2} + O(\mu_N^{n+1}) \]
	\begin{proof}
		Since
		\[d_\lambda \DD \PiLeq\lambda(x)\]
			is a positive measure we can apply the Tauberian argument of \cite{DG}, Proposition 2.1 to Lemma \ref{DDPiBLem} with $B$ the identity to get
			\[\DD \PiLeq\lambda(x) = (2\pi)^{-n}\int_{S^*_x M} \xi\otimes\xi\ \dS_{g_0}(\xi) \lambda^{n+2} + O(\lambda^{n+1})\]
		and the theorem follows from Lemma \ref{TensorIntegralLem} and the definition of $E_N$.
	\end{proof}
\end{prop}

Since $\DD \PiLeq\lambda(x)$ is increasing in $\lambda$ we can subtract to get the useful estimate
\begin{equation}
\sum_{\lambda_j \in (\lambda, \lambda+\delta]}d\phi_j(x)\otimes d\phi_j(x) = \DD \Pi_{(\lambda,\lambda+\delta]}(x) \leq C|\delta|\lambda^{n+1} + O(\lambda^{n})
	\label{DDPiDeltaEq}
\end{equation}
for some positive constant $C$.

\section{Bergman Asymptotics}\label{ResultsSection}
Next we derive the Bergman metric asymptotics:
\begin{customthm}{\ref{LimitThm}}
	If $B$ is a positive-definite and symmetric order zero pseudodifferential operator we have
	\[ \E_N( \langle B\cdot, \cdot\rangle) (x) = \left(\frac{1}{(2\pi)^{n+2}(n+2)}\int_{S^*_x M}b(x,\xi) \xi\otimes\xi\ \dS_g(\xi)
	\right)\mu_N^{n+2} + o(\mu_N^{n+2})\]
	as $N\to\infty$.
\end{customthm}

The proof of the theorem combines two standard arguments. The first, which is formalized in Lemma \ref{PiLem} below, is that 
\[\DD[\PiLeq\lambda B -\Toeplitz{\lambda}{B}] = \DD[\PiLeq\lambda B (I-\PiLeq\lambda)]\] 
is ``lower order'' and so for the sake of asymptotics it suffices to study $\DD \PiLeq\lambda B (x)$ whose smoothed out asymptotics we have already worked out.  Then we need to adapt the Tauberian argument to this setting since
\[\DD \PiLeq\lambda B \PiLeq\lambda(x)\]
is not in general non-decreasing.

Let $|\cdot|_1$ denote the trace norm and $|\cdot|_2$ denote the Hilbert-Schmidt norm for finite rank operators.
\begin{lem}\label{PiLem}
	If $B$ is an order zero pseudodifferential operator then as $\lambda\to\infty$ we have
	\begin{enumerate}[(i)]
		\item\label{PiLemi} $|\PiLeq\lambda B (I-\PiLeq\lambda)|_2 = o(\lambda^{n/2})$
		\item\label{PiLemii} $||\DD(\PiLeq\lambda B (I-\PiLeq\lambda))||_{\infty} = o(\lambda^{n+2})$.
	\end{enumerate}
	\begin{proof}
		The first claim is (\cite{Guillemin}, Lemma 3.4), which is based on the argument of Widom \cite{Widom} which we now adapt to prove the second claim.

		Note that for any $j$ and $k$ we have
		\begin{equation}
			\Pi_{\lambda_j} B \Pi_{\lambda_k} = (\lambda_j - \lambda_k)^{-1}\Pi_{\lambda_j} [\sqrt\D,B]\Pi_{\lambda_k}.
			\label{}
		\end{equation}
		We rewrite
		\begin{equation}
			\PiLeq\lambda B(I-\PiLeq\lambda) = \PiLeq\lambda B\Pi_{(\lambda+\delta,\infty)} + \PiLeq\lambda B\Pi_{[\lambda,\lambda+\delta]}.
			\label{PiLemEq1}
		\end{equation}

		To estimate the first term we let $\tilde B = [\sqrt \D, B]$ which is another order zero pseudodifferential operator and we have
		\begin{align*}
			\PiLeq\lambda B (I-\PiLeq{\lambda+\delta}) 
			&=\sum_{\substack{\lambda_j \leq \lambda \\ \lambda_k > \lambda+\delta}} (\lambda_j-\lambda_k)^{-1}\Pi_{\lambda_j}\tilde B\Pi_{\lambda_k}.
		\end{align*}
		But $(\lambda_j - \lambda_k)^{-1} \leq \delta^{-1}$ so we can bound $\DD$ of the left hand side at a point $x$ by
		\begin{equation}
			\delta^{-1} \sum_{\lambda_j \leq \lambda} |d\phi_j(x)| 
				\sum_{\lambda_k > \lambda+\delta} \left|\langle \tilde B \phi_j, \phi_k \rangle\right| |d\phi_k(x)|.
			\label{PiLemEq3}
		\end{equation}
		Repeatedly applying the commutator trick above we have
		\begin{equation}
			\langle \tilde B \phi_j, \phi_k \rangle \leq C_N |\lambda_j-\lambda_k|^{-N}
			\label{}
		\end{equation}
		where $C_N$ is the norm of $N$-fold commutator of $B$ with $\sqrt{\D}$. Also by applying the Cauchy-Schwarz inequality to (\ref{DDPiDeltaEq}) we have
		\begin{equation}
			\sum_{\lambda < \lambda_k \leq \lambda+1} |d\phi_j(x)| = O(\lambda^{\frac{n+1}{2}}).
			\label{}
		\end{equation}
		So we can break up the inner sum in equation (\ref{PiLemEq3}) into unit length segments and get the bound
		\begin{align*}
			C_N\sum_{\lambda_k > \lambda + \delta} |\lambda_k-\lambda|^{-N} |d\phi_k(x)| 
			\leq C_N \sum_{T=0}^\infty (T+\delta)^{-N} (T+\lambda+\delta+1)^{\frac{n+1}{2}}
			\label{}
		\end{align*}
		and so by picking $N = \frac{n+5}{2}$, say, the inner sum converges and is bounded by a constant times $\delta^{-2}\lambda^{\frac{n+1}{2}}$. Finally we break up the outer sum into unit length intervals
		\begin{equation}
			\delta^{-3}\sum_{\substack{T=0 \\ T < \lambda}} \sum_{\lambda_j \in [T,T+1]} |d\phi_j(x)| \lambda^{\frac{n+1}{2}} \leq |\delta|^{-3}\lambda^{n+2}.
			\label{PiLemTerm1}
		\end{equation}
		where we used Cauchy-Schwarz on the inner sum and the gradient estimate (\ref{DDPiDeltaEq}).

		To estimate the second term in (\ref{PiLemEq1}) we use the same gradient estimate and Cauchy-Schwarz:
		\begin{align}\label{PiLemTerm2}
		|\DD\Pi_{[0,\lambda]}B \Pi_{(\lambda,\lambda+\delta]}(x)| &\leq ||B||\sum_{\lambda_j \leq \lambda}\sum_{\lambda < \lambda_k \leq \lambda+\delta} |d\phi_j(x)| |d\phi_k(x)| \notag \\
		&\leq ||B|| \left(\sum_{\lambda_j \leq \lambda} |d\phi_j(x)|^2\right)^{\frac{1}{2}} \left(\sum_{\lambda < \lambda_k \leq \lambda+\delta} |d\phi_j(x)|^2\right)^{\frac{1}{2}} \\
		&\leq C|\delta|\lambda^{n+\frac{3}{2}}. \notag
		\end{align}

		Thus combining the estimates (\ref{PiLemTerm1}), (\ref{PiLemTerm2}) we have with another constant $C$
		\begin{equation}
			\lambda^{-n-2}|\DD \PiLeq\lambda B (I - \PiLeq\lambda)(x)| \leq C (|\delta|^{-3} + |\delta|\lambda^{-1/2}).
			\label{}
		\end{equation}
		and so letting $\lambda \to \infty$ we have
		\begin{equation}
			\limsup \lambda^{-n-2}|\DD \PiLeq\lambda B (I - \PiLeq\lambda)(x)| \leq \frac{C}{\delta^{3}}
			\label{}
		\end{equation}
		and by taking $\delta$ arbitrarily large we are done.
	\end{proof}
\end{lem}

\begin{proof}[Proof of Theorem \ref{LimitThm}]
	By the previous lemma it only remains to derive the asymptotics
	\begin{equation}\label{DPiBEq}
		\DD\PiLeq\lambda B(x) = \frac{1}{(2\pi)^{n+2}(n+2)}\int_{S^*_x}b(x,\xi)\ \xi\otimes\xi \dS_g.
	\end{equation}
	For a tangent vector $X \in T_x M$, 
	\[\DD\PiLeq\lambda B(x)(X, X)\]
	is not in general non-decreasing in $\lambda$, so we cannot apply the Tauberian method directly. However it is easy to see that
	\[|\DD\PiLeq\lambda B(x)| \leq |B| \DD\PiLeq\lambda(x).\]
	Hence by replacing $B$ with $B + CI$ for some $C > |B|$ we can ensure that 
	\[\DD\PiLeq\lambda (B+CI)(x)(X, X)\]
	is non-decreasing for all points $x \in M$ and tangent vectors $X \in T_x M$. Thus the Tauberian method applied to Lemma \ref{DDPiBLem} gives

	\[\DD\PiLeq\lambda (B + CI)(x) = (2\pi)^{-n}\int_{S^*_x}(b(x,\xi) + C)\ \xi\otimes\xi \dS_g.\]
	Then (\ref{DPiBEq}) follows by subtracting the asymptotics for $\DD\PiLeq\lambda$ which we computed in Proposition \ref{DDPiProp}.
\end{proof}

Note that when $B$ is multiplication by the smooth positive function $e^u$ the theorem shows that
\[E_N (\Inner[e^u])(x) = \frac{n(n+2)(2\pi)^{n+2}}{\Vol(S^{n-1})} e^{u(x)} g_0(x) + o(\mu_N^{n+2}).\]
In other words, a conformal metric $e^u g_0$ is approximated by $\E_N \Inner[e^u]$.  
Of course we would also like to approximate arbitrary, non-conformal, metrics. 

\section{The Hilb Map}\label{HilbSection}
\begin{customthm}{\ref{InverseThm}}
	Given a Riemannian metric $g$ and a positive definite symmetric order zero pseudodifferential operator $B$ with principal symbol
	\begin{equation}\label{InverseSymbolEq}
		b(x,\xi) = \frac{n(n+2)(2\pi)^{n+2}}{\Vol(S^{n-1})}\frac{dV_{g_0}}{dV_g}||\xi||_g^{-n-2}.
	\end{equation}
	for $\xi \in S^*_x M$ we have
	\[\E_N(\Inner[B] ) = \mu_N^{n+2}g + o(\mu_N^{n+2}).\]
	\begin{proof}
		We know that for any $g$ we have
		\begin{equation}
			g(x) = \frac{n}{\Vol(S^{n-1})}\int_{ {^g}S^*_x M}\xi\otimes\xi\ \dS_g(\xi)
			\label{InverseThmEq1}
		\end{equation}
		where the integral is over the cosphere ${^g}S^*_x M$ with respect to $g$ and the volume form $\dS_g$ is induced by $g$.  In light of Theorem \ref{LimitThm} we wish to rewrite this as a similar integral over the $g_0$ cosphere.  We achieve this simply by pulling the integral back under the diffeomorphism from the $g_0$-cosphere to the $g$-cosphere given by		
		\begin{equation}
			\xi \mapsto \xi/||\xi||_{g(x)}.
			\label{InverseThmEq0}
		\end{equation}

		Let us work in Riemannian normal coordinates for $g_0$ at $x$ so that $g_0(x)$ is the identity and let $g(x) = Q^*Q$ in these coordinates.  Then $Q$ is an isometry from the $g$ cosphere to the $g_0$ cosphere and we proceed by factoring (\ref{InverseThmEq0}) as
		\begin{equation}\xymatrix{
			S^*_x M \ar[r]_{\frac{Q\cdot}{||Q^\cdot||}} &S^*_x M \ar[r]_{Q^{-1}\cdot}^{\sim} & {^g}S^*_x M 
		}\end{equation}
		So we first pull back under $Q^{-1}$, substituting $\xi = Q^{-1}\eta$.
		\begin{align*}
			\int_{ {^g}S^*_x M}\xi\otimes\xi\ \dS_g(\eta) &= \int_{S^*_x M} Q^{-1}\eta \otimes Q^{-1}\eta\ \dS_{g_0}(\eta)
		\end{align*}
		Next we pull this integral back under the diffeomorphism $f$ of $S^*_x M$ given by
		\[f(\eta) = Q\eta/||Q\eta||\]
		to see that
		\begin{equation}
			\int_{ {^g}S^*_x M}\xi\otimes\xi\ \dS_g(\xi) = \int_{S^*_x M} ||Q\eta||^{-2} \eta\otimes\eta\ f^*\dS_{g_0}(\eta).
			\label{InverseThmEq2}
		\end{equation}
		
		It remains to compute $f^*\dS_{g_0}$.  For this we first observe that
		\begin{equation*}
			f(\eta + \e\zeta) = \frac{Q\eta}{||Q\eta||} +\left(\frac{Q\zeta}{||Q\eta||} - \frac{\langle Q\eta, Q\zeta\rangle}{||Q\eta||^3} Q\eta\right)\e+ O(\e^2).
		\end{equation*}
		Next recall that $\dS_{g_0}(\eta) = \eta \lrcorner \dV_{g_0}$ where we view $\eta$ as the normal vector to $\eta$ in $S^*_x M$, so if $\zeta_1,\ldots,\zeta_{n-1}$ are tangent vectors to $\eta$ in $S^*_x M$ we have
		\begin{equation*}
			\begin{aligned}
			f^*\dS_{g_0}(f_*\zeta_1,\ldots,f_*\zeta_n) &= \dV_{g_0}(f(\eta),f_*\zeta_1,\ldots,f_*\zeta_{n-1}) \\
			&= \dV_{g_0}\Bigg(\frac{Q\eta}{||Q\eta||}, \frac{Q\zeta_1}{||Q\eta||} + \frac{\langle Q\eta, Q\zeta_1\rangle}{||Q\eta||^3} Q\eta, \\
			&\hspace{5.5 em} \ldots, \frac{Q\zeta_{n-1}}{||Q\eta||} + \frac{\langle Q\eta, Q\zeta_{n-1}\rangle}{||Q\eta||^3}  Q\eta\Bigg) \\
			&= \dV_{g_0}\left(\frac{Q\eta}{||Q\eta||}, \frac{Q\zeta_1}{||Q\eta||}, \ldots, \frac{Q\zeta_{n-1}}{||Q\eta||}\right) \\
			&= \frac{\det Q}{||Q\eta||^n}\dS_{g_0}
		\end{aligned}
	\end{equation*}
	where we used the fact that $\zeta_j \perp \eta$. We can rewrite this as 
	\begin{equation}
		f^*\dS_{g_0} = \frac{dV_{g_0}}{dV_g}||\eta||_g^{-n} \dS_{g_0}.
		\label{InverseThmEq3}
	\end{equation}
	Combining (\ref{InverseThmEq1}),(\ref{InverseThmEq2}) and (\ref{InverseThmEq3}) we have
	\[g(x) = \frac{n}{\Vol(S^{n-1})}\int_{S^*_x M} \frac{dV_{g_0}}{dV_g}||\eta||_g^{-n-2}\ \eta\otimes\eta\ \dS_{g_0}(\eta)\]
	as desired.
	\end{proof}
\end{customthm}
\begin{cor}\label{DensityCor}
	The Bergman metrics are dense in the space of all Riemannian metrics in the $C^0$ topology.
	\begin{proof}
	The theorem shows that
	\[\Met(M) \subset \overline{\bigcup_{N\in\N} \B_N}. \]
	Moreover since by (\ref{BNBNPlusOneEq}) $\B_N \subset \overline{\B_{N+1}}$  we have
	\[\Met(M) \subset \lim_{N\to\infty}\overline{\B_N}.\]
	\end{proof}
\end{cor}

But how do we actually use Theorem \ref{InverseThm} to approximate a given Riemannian metric $g$? We need to quantize the symbol in equation \ref{InverseSymbolEq} to a symmetric and positive-definite order zero pseudodifferential operator.  For this we define
\begin{equation}
\Hilb(g) = \frac{n(n+2)(2\pi)^{n}}{\Vol(S^{n-1})}\frac{dV_{g_0}}{dV_g}B_g^*B_g
\end{equation}
where $B_g$ is the order zero pseudodifferential operator defined by
\[B_g = (\D_{g_0}^{-1}\D_{g})^{-\frac{n+2}{4}}.\]

Then in analogy with the K\"ahler case we we define approximation maps
\[\Hilb_N: \Met(M) \to \I(\H_{\leq N}^*)\]
by
\begin{equation}
	\Hilb_N(g) = \langle \Hilb(g) \cdot,\cdot\rangle
	\label{HilbDef}
\end{equation}
where restriction to $\I(\H_{\leq N}^*)$ is implicit.  Then Theorems \ref{LimitThm} and \ref{InverseThm} immediately imply  
\begin{cor}\label{HilbCor}
	\[E_N\circ\Hilb_N(g) = g + o(1).\]
\end{cor}

\section{Induced Metric on The Space of Metrics}\label{MetricSection}

Using these approximation maps we can naturally induce a Riemannian metric on $\Met(M)$.  Let $\dot g$ be a tangent vector at $g \in \Met(M)$, that is $\dot g$ is a symmetric bilinear form on $T^*M$.  Then we define
\begin{equation}
	||\dot g||^2 = \lim_{N\to\infty} \mu_N^{-n}||D\Hilb_N(\dot g)||^2
	\label{MetMetricEq}
\end{equation}
where the Riemannian metric on the right is the symmetric space metric on $\I(\H_{\leq N}^*)$.  Namely if $\Inner[\dot R]$ is a tangent vector (that is symmetric bilinear form) at $\Inner[R]$ its norm is given by
\[ ||\dot R||^2  = \Tr R^{-1}\dot R R^{-1}\dot R. \]

\begin{customthm}{\ref{MetThm}}
	The induced metric on $\Met(M)$ is positive definite and given by
	\[||\dot g||^2 = \frac{1}{4n(2\pi)^{n}}\int_{S^*M} \left(\Tr_g {\dot g}+ (n+2)\frac{\langle g^{-1}\dot g g^{-1}\xi, \xi\rangle}{||\xi||^2_g}\right)^2 dS_{g_0}(x,\xi).\]
	 Moreover the metric is independent of the choice of quantization in the definition of $\Hilb$.
	\end{customthm}
\begin{proof}
	By definition
	\begin{align*}
		||D\Hilb_N(\dot g)||^2 = \Tr [&(\ToeplitzN{\Hilb(g)})^{-1} \ToeplitzN{D\Hilb(\dot g)} \\
		\cdot&(\ToeplitzN{\Hilb(g)})^{-1} \ToeplitzN{D\Hilb(\dot g)}]
	\end{align*}

	Evaluating this as $N\to\infty$ is an application of the Sz\"ego limit Theorem \cite{Guillemin} which states that 
	\[\Tr [\Toeplitz{\lambda}{B_1} \cdots \Toeplitz{\lambda}{B_k}] = \frac{\lambda^n}{n(2\pi)^n}\int_{S^*M} b_1(x,\xi)\cdots b_k(x,\xi) dS_{g_0}(x,\xi) + o(\lambda^{n}).\]

	We only need to show that if $B_j$ is elliptic then we can put $(\Toeplitz{\lambda}{B_j})^{-1}$ on the left and $b_j^{-1}$ on the right.  We have by Lemma \ref{PiLem}(i)
	\begin{align*}
		|\PiLeq\lambda - \Toeplitz{\lambda}{B}B^{-1}\PiLeq\lambda |_1 &= |\PiLeq\lambda B (I - \PiLeq\lambda) B^{-1} \PiLeq\lambda\ |_1\\
		&\leq |\PiLeq\lambda B (I-\PiLeq\lambda)|_2 \cdot |B^{-1}\PiLeq\lambda|_2 \\
		&= o(\lambda^n).
	\end{align*}
	And so 
	\begin{align*}
	&|\Toeplitz{\lambda}{B^{-1}} - (\Toeplitz{\lambda}{B})^{-1}|_1 \\
	\leq\ &||B^{-1}||\cdot|\Toeplitz{\lambda}{B}(\Toeplitz{\lambda}{B^{-1}} - (\Toeplitz{\lambda}{B})^{-1})|_1 \\
		=\ &o(\lambda)^n.
	\end{align*}
	Thus
	\begin{align*}
		\Tr [(\Toeplitz{\lambda}{B_1^{-1}} &- (\Toeplitz{\lambda}{B_1})^{-1})(\Toeplitz{\lambda}{B_2}\cdots \Toeplitz{\lambda}{B_k})] \\
		&\leq ||B_2\cdots B_k||\cdot |(\Toeplitz{\lambda}{B_1^{-1}} - (\Toeplitz{\lambda}{B_1})^{-1})|_1 = o(\lambda^n).
	\end{align*}
And so by the Szeg\"o theorem
	\[\Tr (\Toeplitz{\lambda}{B_1})^{-1} \cdots \Toeplitz{\lambda}{B_k} = \frac{\lambda^n}{n(2\pi)^n}\int_{S^*M} b_1^{-1}(x,\xi)\cdots b_k(x,\xi) dS_{g_0}(x,\xi) + o(\lambda^{n}).\]
By repeating this argument we have
	\[||D_g\Hilb_N(\dot g)||^2 = \frac{\mu_N^n}{n(2\pi)^n}\int_{S^*M}(\sigma\circ\Hilb(g)(x,\xi))^{-2} (\sigma\circ D\Hilb(\dot g)(x,\xi))^2 dS_{g_0}(x,\xi) + o(\mu_N^{n}).\]

	It remains to compute $\sigma\circ D\Hilb = D(\sigma\circ\Hilb)$ by linearity of the principal symbol map.  Here it is clear that $||\dot g||^2$ depends only on $\sigma\circ\Hilb$ and is, as claimed, independent of the choice of quantization in defining $\Hilb$.
	
	Note that if $\dot g$ is a tangent vector at $g \in \Met(M)$, that is a symmetric covariant 2-tensor, then as $\e \to 0$ we have
	\[||\xi||^2_{g+\e\dot g} = ||\xi||^2_g - \e\langle g^{-1}\dot g g^{-1} \xi, \xi\rangle + O(\e^2).\]

	Thus modulo $\e^2$ we have
	\begin{align*}
		\sigma\circ\Hilb_N (g+\e\dot g) &=(n+2)(2\pi)^{n}(\det g_0^{-1}(g+\e\dot g))^{-1/2} ||\xi||^{-n-2}_{g+\e\dot g} \\
		&= \sigma\circ\Hilb_N (g) (1+\frac{\e}{2}\Tr g^{-1}\dot g)\left(1+\e\frac{n+2}{2}\frac{\langle g^{-1}\dot g g^{-1} \xi, \xi\rangle}{||\xi||^2_{g}}\right).
	\end{align*}

	Thus
	\begin{equation}
		D_{g} (\sigma\circ\Hilb_N) (\dot g) = \frac{\sigma\circ\Hilb_N(g)}{2}\left(\Tr g^{-1}\dot g + (n+2)\frac{\langle g^{-1}\dot g g^{-1} \xi, \xi\rangle}{||\xi||^2_{g}}\right).
		\label{}
	\end{equation}

	We substitute $\tilde g = g^{-1}\dot g g^{-1}$ and note that $\tilde g (x) = 0$ if and only if $\dot g(x) = 0$ and we have
	\begin{equation}\label{GTildeEq}
		||\dot g||^2 = \frac{1}{4n(2\pi)^n}\int_{S^*M} \left(\Tr{g\tilde g} + (n+2)\frac{\langle \tilde g \xi, \xi\rangle}{||\xi||^2_g}\right)^2 dS_{g_0}(x, \xi).\end{equation}
	Since
	\begin{align*}
		|\Tr g\tilde g(x)| &\leq n\sup_{\xi \in ^{g}S^*_x(M)}|\langle \tilde g\xi, \xi\rangle| = n\sup_{\xi \in S^*_x(M)}\frac{|\langle \tilde g\xi, \xi\rangle|}{||\xi||^2_g}
	\end{align*}
	we have the weaker inequality
	\[|\Tr g\tilde g(x)| \leq \sup_{\xi \in S^*_x M}\left|(n+2)\frac{\langle \tilde g \xi, \xi\rangle}{||\xi||^2_g}\right|\]
	with equality if and only if $\tilde g(x) \equiv 0$. Hence the integrand in (\ref{GTildeEq}) is identically zero if and only if $\tilde g \equiv 0$. That is, the metric on $\Met(M)$ is positive definite.
	\end{proof}

\section{The Round Sphere}\label{SphereSection}
Recall that the eigenvalues of the $n$-dimensional round sphere $S^n$ are
\[\mu_N^2 = N(N+n-1)\]
and so $\mu_N \sim N$ as $N\to\infty$.
\begin{customthm}{\ref{SphereThm}}
	Let $(M,g_0)$ be the round sphere. Given an order zero pseudodifferential operator $B$ and an integer $k$ we have as $N\to\infty$ 
	\begin{equation}\label{SphereThmEq}
		\DD\Pi_{N+k} B\Pi_N (x) = (2\pi)^{-n} \left(\int_{-\pi}^\pi e^{-itk} \int_{S^*_x S^n} b\circ G^t(x,\xi) \xi\otimes\xi dS_{g_0}(\xi) dt\right)N^{n+1} + O(N^{n}).
\end{equation}
	\end{customthm}

The proof of Theorem \ref{SphereThm} rests on the fact (\cite{ZelditchZoll}) that the spectral projections kernel on the round sphere is an oscillatory integral (or semiclassical Lagrangian distribution). We recall first the general definition and properties of such objects (following Duistermaat \cite{Duistermaat}) and the calculus in our setting (following \cite{ZelditchZoll}), from which the proof of the theorem will follow.

Let $X$ be a manifold and $i: \Lambda \into T^*X$ be an immersed Lagrangian submanifold of the cotangent bundle of $X$.
Let $\phi:X\times\R^k \to \R$ be a smooth function and 
\begin{equation*}
	C_\phi = \{(x,\theta) \in X \times \R^k \textrm{ ; } d_\theta\phi(x,\theta) = 0\}
\end{equation*}
be the set where $\phi$ is stationary in $\theta$.  Then we say that $\phi$ locally parameterizes $\Lambda$ if the map
\begin{align*}
	i_\phi: C_\phi &\to T^*X	\\
	(x,\theta) &\mapsto (x,d_x\phi)
\end{align*}
has image contained in $i(\Lambda)$.

Then an oscillatory integral with parameter $\tau$ and order $\mu$ associated to $i(\Lambda)$ is a locally finite sum of integrals of the form

\begin{equation*}
	I(x,\tau) = \left( \frac{\tau}{2\pi} \right)^{k/2} \int_{\R^k} e^{i\tau\phi(x,\theta)} a(x,\theta,\tau)d\theta
\end{equation*}
where the amplitude has asymptotic expansion
\begin{equation*}
	a(x,\theta,\tau) \sim \sum_{r=0}^{\infty}a_r(x)\tau^{\mu-r}.
\end{equation*}

The principal symbol of $u$ is a section of $\Omega_{\frac{1}{2}}\otimes \mathcal{M}$, the bundle of half densities on $\Lambda$ tensor the Maslov line bundle. It is defined by

\[\sigma(u)(x_0,\xi_0) = a_0(x_0,\xi_0) \sqrt{d_{C_\phi}}\]
where $d_{C_{\phi}}$ is a certain density supported on $\Lambda$ and depending on a choice of density on $T_{x_0}M$.

If $B$ is a pseudodifferential operator then we have
\begin{equation*}
	\sigma(Bu) = b|_{i(\Lambda)}\sigma(u) + O(\tau^{-1})
\end{equation*}
where $b$ is the principal symbol of $B$.

Consider the embedding 
\[i:S^1\times S^*S^n \into S^*S^n\times S^*S^n\]
given by
\[i(t,x,\xi) = ((x,\xi),G^t(x,\xi))\]
and let $\Gamma$ denote the image which is a Lagrangian submanifold.

Then $\Pi_N(x,y)$ is a semiclassical Lagrangian distribution corresponding to $\Gamma$ with principal symbol given by the half-density
\[i^*\sigma(\Pi_N) = (2\pi)^{-1} N^{\frac{n-1}{2}}e^{iNt}|dt|^\frac{1}{2}|dS_{g_0}|^\frac{1}{2}.\]

When restricted to $S^1\times S^*S^n$ the composition formula for principal symbols of semiclassical Lagrangian distributions associated to $\Gamma$ is (\cite{ZelditchZoll}, p. 430)
\[a*b(t,x,\xi) = \int_{S^1} a(s,x,\xi)b(t-s,G^s(x,\xi))ds.\]

In order to implement the operator $\DD$ we also need to differentiate semiclassical Lagrangian distributions.  We have that if $u$ is a semiclassical Lagrangian distribution of order $k$ associated to $\Lambda$ then
\[\sigma(d_x\otimes d_y u)(x,\xi,y,\eta) = \xi\otimes\eta\ \sigma(d_x\otimes d_y u)(x,\xi,y,\eta)\]
with $d_x\otimes d_y u$ also associated to $\Lambda$ but of order $k+2$ (since each derivative brings down a $k$) and taking values in the covariant 2-tensor bundle.

The final piece of the calculus which we will need is the operation of restriction to diagonal in $S^n \times S^n$. Let $i_\D:S^n\to S^n\times S^n$ be the diagonal embedding $x\mapsto (x,x)$.  Note that the pullback of $\Gamma$ under $i_{\D}$ is simply
\begin{align*} 
	i_{\D}^* \Gamma &= \{(x,\xi+\eta) \in T^*S^n \textrm{ ; } (x,\xi,x,\eta) \in \Gamma \} \\
	&= \{(x,0) \in T^*S^n \} = S^n
\end{align*}
where $S^n$ denotes the zero section in $T^*S^n$.

The symbol of the pullback is thus given by 
\[i^*\sigma(u|_{x=y})(x,0) = C \int_{S^1}i^*\sigma(u)(t,x,0)|dt|^{\frac{1}{2}}.\]

\begin{proof}[Proof of Theorem \ref{SphereThm}]
	Using the above calculus we have that $\Pi_{N+k} B \Pi_N$ is a semiclassical Lagrangian distribution associated to $\Gamma$ with symbol
	\[i^*\sigma(\Pi_{N+k} B \Pi_N)(t,x,\xi) = \xi\otimes\xi \int_{S^1}e^{i[(N+k)t+ks]}b\circ G^s(x,\xi) ds |dt|^{\frac{1}{2}}|dS_{g_0}|^{\frac{1}{2}}.\]
	Then $\DD\Pi_N B \Pi_N$ is an oscillatory integral of order $\frac{n}{2}$ associated to the zero section in $T^*S^n$ and with symbol
	\[\sigma(\DD\Pi_{N+k} B \Pi_N)(x,0) = C \int_{S^1}e^{iks}\int_{S^*_x S^n}b\circ G^s(x,\xi) \xi\otimes \xi dS_{g_0}(\xi) ds |dV_{g_0}|^{\frac{1}{2}}.\]
	Since $i_\D^*\Gamma$ is the zero section and the volume form on the round sphere is uniform we have
	\begin{equation}
		\DD\Pi_{N+k} B \Pi_N(x) = C \int_{S^1}e^{iks}\int_{S^*_xS^n}b\circ G^s(x,\xi) \xi\otimes \xi dS_{g_0}(\xi) ds
	\label{SphereThmEq1}
\end{equation}
with a new constant which we claim to be $(2\pi)^{-n}$. The constant is independent of $B$ and $k$ so we use the fact that when $B=I$ and $k=0$
\[\DD\Pi_{N+k} B \Pi_N (x) = \DD \Pi_N(x) = \frac{N^n\Vol(S^{n-1})}{n(2\pi)^n}g_0(x)\]
where we used Takahashi's theorem \cite{Takahashi} and Weyl's law. On the other hand, in this case by Lemma \ref{TensorIntegralLem} the right hand side of equation \ref{SphereThmEq1} is 
\begin{equation*}
	CN^{n+1} \int_{S^1}\int_{S^*_xS^n}\xi\otimes \xi dS_{g_0}(\xi) ds = CN^{n}\frac{\Vol(S^{n-1})}{n}g_0
	\label{}
\end{equation*}
so we have $C = (2\pi)^{-n}$ as claimed.
\end{proof}

Note that the metrics in equation \ref{SphereThmEq} are indeed symmetric with respect to $x\mapsto -x$ as expected from the parity of spherical harmonics.  When $k=0$ we have the following corollary:

\begin{cor}
	Let $B$ be an order zero pseudodifferential operator which is positive definite and symmetric on $\H_{N}$ for $N$ sufficiently large.  Then we have
	\[\iota_N^*\Inner[B](x) = (2\pi)^{-n}\left(\int_{S^*_xS^n} \xi\otimes\xi \int_{S^1} b\circ G^t(x,\xi)\ dt\ dS_{g_0}(\xi)\right)N^{n+1} + O(N^{n})\]
	as $N\to\infty$.
	\begin{comment}
	\begin{proof}
		Note that 
		\[\iota_N^*\Inner[B](x) = \DD \Pi_N B \Pi_N(x)\]
		Let $\hat B$ be a pseudodifferential operator whose symbol is given by averaging the symbol of $B$ along geodesics:
		\[\sigma(B)(x,\xi) = \int_{S^1} b \circ g^t(x,\xi) dt.\]
		Then note from the calculus of semiclassical Lagrangian distributions we have $\Pi_N B \Pi_N = \Pi_N \hat B \Pi_N$ modulo lower order terms. And again by the calculus we have
		\[\sigma((\Pi_N \hat B \Pi_N)(\Pi_N \hat{B}^{-1}\Pi_N)) = \sigma(\Pi_N).\]
		So $(\Pi_N B \Pi_N)^{-1} = \Pi_N \hat{B}^{-1}\Pi_N$ modulo lower order terms.  Hence if $s_N$ is a sequence of orthonormal bases for $\Pi_N B \Pi_N$ then
		\[\lim_{N\to\infty} s_N^* g_E = \lim_{N\to\infty} \DD \Pi_N \hat{B}^{-1} \Pi_N\]
		from which the corollary follows by applying the theorem for $k=0$ to the right hand side.
	\end{proof}
	\end{comment}
\end{cor}

Next we show how this theorem can be used to derive Bergman metric asymptotics in the sphere case with a better remainder term.

\begin{theorem}
	Let $(M,g_0)$ be the round sphere and $B$ an order zero pseudodifferential operator which is symmetric and positive definite. Then 	\[ \iota_{\leq N}^*\Inner[B](x) = \left(\frac{1}{(2\pi)^{n}(n+2)}\int_{S^*_x S^n}b(x,\xi) \xi\otimes\xi\ \dS_g(\eta)
	\right)N^{n+2} + O(N^{n+1})\]
as $N\to\infty$.
	\begin{proof}
		Let 
		\[\a_{jk} = \DD \Pi_j B \Pi_k (x) \]
		so that
		\[ \DD \Pi_{\leq N} B \Pi_{\leq N} (x) = \sum_{j,k = 0}^N \a_{jk}.\]
		Now we rearrange the sum by defining
		\[\b_{lm} =  \begin{cases} 
			\a_{m,l+m} & l \geq 0 \\
			\a_{-l+m,m} & l < 0
		\end{cases} \]
		so that $\b_{lm}$ is the $l$-th diagonal above (or below when $l < 0$) the main diagonal of $(a_{jk})$.
		Then by Theorem \ref{SphereThm}, for fixed $l$ and as $m\to\infty$ we have
		\[ \b_{lm} =  \frac{m^{n+1}}{(2\pi)^{n}}\int_{S^1} e^{itl}\left(\int_{S^*_x S^n}(b\circ G^t(x,\eta) \xi\otimes\xi \dS_g(\xi) + O(m^{-1})\right)dt.\]
		Now we can rewrite the sum
		\[ \sum_{i,j=0}^N \a_{ij} = \sum_{m=0}^N \sum_{l=-(N-m)}^{N-m} \b_{lm}.\]
		And by convergence of Fourier series for smooth functions on the circle we have for any positive integer $r$
		\[ \left| \sum_{l=-(N-m)}^{N-m} \b_{lm} - \frac{1}{(2\pi)^{n}}\left(\int_{S^*_x S^n} b(x,\xi) \xi\otimes\xi \dS_g(\xi)\right) m^{n+1} \right| \leq C_r\frac{m^{n+1}}{(1+(N-m))^{-r}} + m^n.\]
		And so
		\begin{align*}
			\DD \Pi_{\leq N} B \Pi_{\leq N} (x) &= \sum_{m=0}^N \sum_{l=-(N-m)}^{N-m} \b_{lm} \\
			&= \frac{N^{n+2}}{(n+2)(2\pi)^{n}}\int_{S^*_x S^n} b(x,\xi) \xi\otimes\xi\ \dS_{g_0}(\xi) + O(N^{n+1})
		\end{align*}
		as claimed.
	\end{proof}
\end{theorem}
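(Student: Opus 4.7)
My plan is to exploit the block decomposition
\[\iota_{\leq N}^*\Inner[B](x) = \DD(\Pi_{\leq N} B \Pi_{\leq N})(x) = \sum_{j,k=0}^N \DD(\Pi_j B \Pi_k)(x)\]
and reduce the theorem to a resummation of the asymptotics already furnished by Theorem \ref{SphereThm}. Writing $\alpha_{jk} := \DD(\Pi_j B \Pi_k)(x)$ and reindexing along diagonals by $\beta_{lm} := \alpha_{m,m+l}$ for $l \geq 0$ and $\beta_{lm} := \alpha_{m-l,m}$ for $l < 0$, the double sum becomes
\[\sum_{j,k=0}^N \alpha_{jk} = \sum_{m=0}^N \sum_{l=-(N-m)}^{N-m} \beta_{lm}.\]
Theorem \ref{SphereThm}, applied with the two projection indices differing by $l$, identifies $\beta_{lm}$ up to an $O(m^n)$ remainder with $m^{n+1}/(2\pi)^n$ times the $l$-th Fourier mode (in the sense of the theorem) of the smooth function
\[F(t,x) := \int_{S^*_x S^n} b\circ G^t(x,\xi)\,\xi\otimes\xi\, dS_{g_0}(\xi)\]
on $S^1$.

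The second step is to recognise the inner sum as a symmetric partial Fourier sum of $F(\cdot,x)$, evaluated at $t=0$. Since $G^0$ is the identity, the full Fourier series of $F(\cdot,x)$ at $t=0$ recovers $\int_{S^*_x S^n} b(x,\xi)\,\xi\otimes\xi\, dS_{g_0}(\xi)$, and because $F$ is smooth in $t$ its symmetric partial sum up to mode $N-m$ converges to this value with the standard rapid-decay rate $O_r((1+N-m)^{-r})$ for every $r$. Substituting yields, for a constant $C_r$ depending on $r$ and a universal $C$,
\[\sum_{l=-(N-m)}^{N-m} \beta_{lm} = \frac{m^{n+1}}{(2\pi)^n}\int_{S^*_x S^n} b(x,\xi)\,\xi\otimes\xi\, dS_{g_0}(\xi) + R_m,\qquad |R_m| \leq C_r\, m^{n+1}(1+N-m)^{-r} + C m^n.\]
Summing over $m$ and using $\sum_{m=0}^N m^{n+1} = N^{n+2}/(n+2) + O(N^{n+1})$ produces the claimed principal term.

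The main technical obstacle is bounding the cumulative error $\sum_{m=0}^N |R_m|$ by $O(N^{n+1})$; the $Cm^n$ contribution sums trivially, so the delicate piece is $\sum_{m=0}^N m^{n+1}(1+N-m)^{-r}$. I would split the sum at $m = N/2$: for $m \leq N/2$ the factor $(1+N-m)^{-r}$ is at most $(N/2)^{-r}$, contributing $O(N^{n+2-r})$; for $m > N/2$ one uses $m^{n+1} \leq N^{n+1}$ together with the fact that $\sum_{N/2 < m \leq N}(1+N-m)^{-r}$ is uniformly bounded once $r \geq 2$. Taking $r$ large forces both pieces to be $O(N^{n+1})$. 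The decisive point — and the reason this improved remainder is available on the round sphere — is that a naive estimate treating $\beta_{lm}$ as $O(m^{n+1})$ uniformly in $l$ would only yield $O(N^{n+2})$; only the superpolynomial $l$-decay supplied by smoothness of $F$ compensates for the $O(N)$ length of the inner sum and saves one full power of $N$.
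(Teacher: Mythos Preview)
Your proposal is correct and follows essentially the same route as the paper's own proof: the same diagonal reindexing $\beta_{lm}$, the same identification of the inner $l$-sum as a truncated Fourier series of the smooth function $t\mapsto\int_{S^*_xS^n} b\circ G^t(x,\xi)\,\xi\otimes\xi\,dS_{g_0}(\xi)$, and the same resummation in $m$. Your explicit split at $m=N/2$ to control $\sum_m m^{n+1}(1+N-m)^{-r}$ spells out a step the paper leaves implicit, but the argument is otherwise identical.
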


\bibliographystyle{amsalpha}
\bibliography{paper}

\providecommand{\bysame}{\leavevmode\hbox to3em{\hrulefill}\thinspace}
\providecommand{\MR}{\relax\ifhmode\unskip\space\fi MR }
% \MRhref is called by the amsart/book/proc definition of \MR.
\providecommand{\MRhref}[2]{%
  \href{http://www.ams.org/mathscinet-getitem?mr=#1}{#2}
}
\providecommand{\href}[2]{#2}
\begin{thebibliography}{GMM91}

\bibitem[BBG94]{BBG}
P.~B{\'e}rard, G.~Besson, and S.~Gallot, \emph{Embedding {R}iemannian manifolds
  by their heat kernel}, Geom. Funct. Anal. \textbf{4} (1994), no.~4, 373--398.
  \MR{1280119 (95g:58228)}

\bibitem[Cla10]{Clarke}
Brian Clarke, \emph{The metric geometry of the manifold of {R}iemannian metrics
  over a closed manifold}, Calc. Var. Partial Differential Equations
  \textbf{39} (2010), no.~3-4, 533--545. \MR{2729311 (2011i:58011)}

\bibitem[CS12]{ChenSun}
Xiuxiong Chen and Song Sun, \emph{Space of k{\"a}hler metrics (v)--k{\"a}hler
  quantization}, Metric and Differential Geometry, Springer, 2012, pp.~19--41.

\bibitem[DeW71]{DeWitt}
Bryce~S. DeWitt, \emph{Quantum theories of gravity}, General Relativity and
  Gravitation \textbf{1} (1970/71), no.~2, 181--189. \MR{0398424 (53 \#2275)}

\bibitem[DG75]{DG}
J.~J. Duistermaat and V.~W. Guillemin, \emph{The spectrum of positive elliptic
  operators and periodic bicharacteristics}, Invent. Math. \textbf{29} (1975),
  no.~1, 39--79. \MR{0405514 (53 \#9307)}

\bibitem[Don01]{Donaldson2}
S.~K. Donaldson, \emph{Scalar curvature and projective embeddings. {I}}, J.
  Differential Geom. \textbf{59} (2001), no.~3, 479--522. \MR{1916953
  (2003j:32030)}

\bibitem[Don05]{Donaldson1}
\bysame, \emph{Scalar curvature and projective embeddings. {II}}, Q. J. Math.
  \textbf{56} (2005), no.~3, 345--356. \MR{2161248 (2006f:32033)}

\bibitem[Dui74]{Duistermaat}
J.~J. Duistermaat, \emph{Oscillatory integrals, {L}agrange immersions and
  unfolding of singularities}, Comm. Pure Appl. Math. \textbf{27} (1974),
  207--281. \MR{0405513 (53 \#9306)}

\bibitem[Ebi70]{Ebin}
David~G. Ebin, \emph{The manifold of {R}iemannian metrics}, Global {A}nalysis
  ({P}roc. {S}ympos. {P}ure {M}ath., {V}ol. {XV}, {B}erkeley, {C}alif., 1968),
  Amer. Math. Soc., Providence, R.I., 1970, pp.~11--40. \MR{0267604 (42
  \#2506)}

\bibitem[FG89]{Freed}
Daniel~S. Freed and David Groisser, \emph{The basic geometry of the manifold of
  {R}iemannian metrics and of its quotient by the diffeomorphism group},
  Michigan Math. J. \textbf{36} (1989), no.~3, 323--344. \MR{1027070
  (91a:58039)}

\bibitem[Fin10]{Fine}
Joel Fine, \emph{Calabi flow and projective embeddings}, J. Differential Geom.
  \textbf{84} (2010), no.~3, 489--523, With an appendix by Kefeng Liu and
  Xiaonan Ma. \MR{2669363 (2012a:32023)}

\bibitem[FKZ13]{ZelditchRandom}
Frank Ferrari, Semyon Klevtsov, and Steve Zelditch, \emph{Random {K}\"ahler
  metrics}, Nuclear Phys. B \textbf{869} (2013), no.~1, 89--110. \MR{3009224}

\bibitem[GMM91]{Michor}
Olga Gil-Medrano and Peter~W. Michor, \emph{The {R}iemannian manifold of all
  {R}iemannian metrics}, Quart. J. Math. Oxford Ser. (2) \textbf{42} (1991),
  no.~166, 183--202. \MR{1107281 (92f:58024)}

\bibitem[Gui79]{Guillemin}
Victor Guillemin, \emph{Some classical theorems in spectral theory revisited},
  Seminar on {S}ingularities of {S}olutions of {L}inear {P}artial
  {D}ifferential {E}quations ({I}nst. {A}dv. {S}tudy, {P}rinceton, {N}.{J}.,
  1977/78), Ann. of Math. Stud., vol.~91, Princeton Univ. Press, Princeton,
  N.J., 1979, pp.~219--259. \MR{547021 (81b:58045)}

\bibitem[H{\"o}r07]{Hormander3}
Lars H{\"o}rmander, \emph{The analysis of linear partial differential
  operators. {III}}, Classics in Mathematics, Springer, Berlin, 2007,
  Pseudo-differential operators, Reprint of the 1994 edition. \MR{2304165
  (2007k:35006)}

\bibitem[H{\"o}r09]{Hormander4}
\bysame, \emph{The analysis of linear partial differential operators. {IV}},
  Classics in Mathematics, Springer-Verlag, Berlin, 2009, Fourier integral
  operators, Reprint of the 1994 edition. \MR{2512677 (2010e:35003)}

\bibitem[JMS10]{JMS}
Peter~W. Jones, Mauro Maggioni, and Raanan Schul, \emph{Universal local
  parametrizations via heat kernels and eigenfunctions of the {L}aplacian},
  Ann. Acad. Sci. Fenn. Math. \textbf{35} (2010), no.~1, 131--174. \MR{2643401
  (2011g:58038)}

\bibitem[She07]{Sheffield}
Scott Sheffield, \emph{Gaussian free fields for mathematicians}, Probab. Theory
  Related Fields \textbf{139} (2007), no.~3-4, 521--541. \MR{2322706
  (2008d:60120)}

\bibitem[Shu01]{Shubin}
M.~A. Shubin, \emph{Pseudodifferential operators and spectral theory}, second
  ed., Springer-Verlag, Berlin, 2001, Translated from the 1978 Russian original
  by Stig I. Andersson. \MR{1852334 (2002d:47073)}

\bibitem[Tak66]{Takahashi}
Tsunero Takahashi, \emph{Minimal immersions of {R}iemannian manifolds}, J.
  Math. Soc. Japan \textbf{18} (1966), 380--385. \MR{0198393 (33 \#6551)}

\bibitem[Tay81]{Taylor}
Michael~E. Taylor, \emph{Pseudodifferential operators}, Princeton Mathematical
  Series, vol.~34, Princeton University Press, Princeton, N.J., 1981.
  \MR{618463 (82i:35172)}

\bibitem[Tia90]{Tian}
Gang Tian, \emph{On a set of polarized {K}\"ahler metrics on algebraic
  manifolds}, J. Differential Geom. \textbf{32} (1990), no.~1, 99--130.
  \MR{1064867 (91j:32031)}

\bibitem[Wid79]{Widom}
Harold Widom, \emph{Eigenvalue distribution theorems for certain homogeneous
  spaces}, J. Funct. Anal. \textbf{32} (1979), no.~2, 139--147. \MR{534671
  (80h:58054)}

\bibitem[Zel97]{ZelditchZoll}
Steve Zelditch, \emph{Fine structure of {Z}oll spectra}, J. Funct. Anal.
  \textbf{143} (1997), no.~2, 415--460. \MR{1428823 (98g:58173)}

\bibitem[Zel98]{ZelditchTian}
\bysame, \emph{Szeg{\"o} kernels and a theorem of {T}ian}, Internat. Math. Res.
  Notices (1998), no.~6, 317--331. \MR{1616718 (99g:32055)}

\bibitem[Zel09]{ZelditchWaves}
\bysame, \emph{Real and complex zeros of {R}iemannian random waves}, Spectral
  analysis in geometry and number theory, Contemp. Math., vol. 484, Amer. Math.
  Soc., Providence, RI, 2009, pp.~321--342. \MR{1500155 (2010c:58040)}

\bibitem[Zwo12]{Zworski}
Maciej Zworski, \emph{Semiclassical analysis}, Graduate Studies in Mathematics,
  vol. 138, American Mathematical Society, Providence, RI, 2012. \MR{2952218}

\end{thebibliography}

\end{document}